\newtheorem{theorem}{Theorem}
\theoremstyle{plain}
\newtheorem{acknowledgement}{Acknowledgement}
\newtheorem{corollary}{Corollary}
\newtheorem{example}{Example}
\newtheorem{lemma}{Lemma}
\newtheorem{proposition}{Proposition}
\newtheorem{remark}{Remark}
\numberwithin{equation}{section}
\begin{document}
\title{The $\theta $-bump theorem for product fractional integrals}
\author{Eric Sawyer}
\address{McMaster University}
\author{Zipeng Wang}
\address{McMaster University}

\begin{abstract}
We extend the one parameter $\theta $-bump theorem for fractional integrals
of Sawyer and Wheeden to the setting of two parameters, as well as improving
the multiparameter result of Tanaka and Yabuta for doubling weights to
classical reverse doubling weights.
\end{abstract}

\maketitle
\tableofcontents

\section{Introduction}

In \cite[Theorem 1(A)]{SaWh}, Sawyer and Wheeden proved that the fractional
integral $I_{\alpha }f\left( x\right) =\int \left\vert x-u\right\vert
^{\alpha -n}f\left( u\right) du$, $x\in \mathbb{R}^{N}$, is bounded from one
weighted space $L^{p}\left( v^{p}\right) $ to another $L^{q}\left(
w^{q}\right) $ provided there is $\theta >1$ such that%
\begin{equation*}
A_{p,q;\theta }^{\alpha ,m}\left( v,w\right) \equiv \sup_{I\in \mathcal{D}%
^{N}}\left\vert I\right\vert ^{\frac{\alpha }{m}-\frac{1}{p}+\frac{1}{q}%
}\left( \frac{1}{\left\vert I\right\vert }\int_{I}v^{-p^{\prime }\theta
}\right) ^{\frac{1}{p^{\prime }\theta }}\ \left( \frac{1}{\left\vert
I\right\vert }\int_{I}w^{q\theta }\right) ^{\frac{1}{q\theta }}<\infty .
\end{equation*}%
Here $1<p\leq q<\infty $, $0<\alpha <N$ and $v,w$ are nonnegative measurable
functions on $\mathbb{R}^{N}$, $N\geq 1$. The finiteness of $%
A_{p,q;1}^{\alpha ,m}\left( v,w\right) $ when $\theta =1$ is a well-known
necessary condition for the boundedness of $I_{\alpha }$, and the above
strengthening of that condition is usually referred to as a $\theta $-bump
condition. In the same paper \cite[the second assertion of Theorem 1(B)]%
{SaWh}, it was shown that in the case $p<q$, if $v^{-p^{\prime }}$ and $%
w^{q} $ are both reverse doubling weights, then the necessary condition $%
A_{p,q;1}^{\alpha ,m}\left( v,w\right) <\infty $ is also sufficient for the
boundedness of $I_{\alpha }$ from $L^{p}\left( v^{p}\right) $ to $%
L^{q}\left( w^{q}\right) $. Here a measure $\mu $ is \emph{reverse doubling}
in $\mathbb{R}^{N}$ if there are $C,\varepsilon >0$ such that%
\begin{equation*}
\left\vert 2^{-s}I\right\vert _{\mu }\leq C2^{-\varepsilon s}\left\vert
I\right\vert _{\mu }\ ,\ \ \ \ \ \text{for all }s>0\text{ and cubes }%
I\subset \mathbb{R}^{N},
\end{equation*}%
where $2^{-s}I$ denotes the cube \emph{concentric} with $I$ and having side
length $\ell \left( 2^{-s}I\right) $ equal to $2^{-s}\ell \left( I\right) $.

Recently, H. Tanaka and K. Yabuta \cite{TaYa} used a clever iteration%
\footnote{%
In a nutshell, they use $p<r<q$ and H\"{o}lder's inequality with $r$ and $%
r^{\prime }$ to separate the measures $\sigma $ and $\omega $ early on, and
then use iteration on the resulting `one weight' Carleson embeddings, the
point being that iteration works better with one weight than with two
weights.} to obtain an $n$-linear embedding theorem for rectangles that has
as a corollary the following result for certain \emph{product} fractional
integrals $\widetilde{I}_{\alpha }^{N}$ on $\mathbb{R}^{N}$ given by 
\begin{equation*}
\widetilde{I}_{\alpha }^{N}f\left( x\right) \equiv \int_{\mathbb{R}%
^{N}}\prod_{j=1}^{N}\left\vert x_{j}-u_{j}\right\vert ^{\alpha -1}f\left(
u\right) du,\ \ \ \ \ x\in \mathbb{R}^{N},\ \ \ \ \ 0<\alpha <1.
\end{equation*}%
Let $\mathcal{R}^{N}$ denote the \emph{partial grid} of all rectangles in $%
\mathbb{R}^{N}$ with sides parallel to the coordinate axes (which is not a
grid). A weight $\mu $ is a rectangle doubling weight on $\mathbb{R}^{N}$ if
there is $C>0$ such that%
\begin{equation*}
\left\vert 2R\right\vert _{\mu }\leq C\left\vert R\right\vert _{\mu },\ \ \
\ \text{ for all rectangles }R\in \mathcal{R}^{N}.
\end{equation*}

\begin{theorem}[{H. Tanaka and K. Yabuta \protect\cite[Proposition 5.1]{TaYa}%
}]
Suppose $1<p<q<\infty $ and that both $v^{-p^{\prime }}$ and $w^{q}$ are
rectangle doubling weights\footnote{%
In \cite{TaYa} the authors use a strong form of reverse doubling on
rectangles, which is equivalent to rectangle doubling. See the appendix
below.} on $\mathbb{R}^{N}$. Then $\widetilde{I}_{\alpha }^{N}$ is bounded
from $L^{p}\left( v^{p}\right) $ to $L^{q}\left( w^{q}\right) $ if and only
if 
\begin{equation*}
\sup_{R\in \mathcal{R}^{N}}\left\vert R\right\vert ^{\frac{\alpha }{N}-\frac{%
1}{p}+\frac{1}{q}}\left( \frac{1}{\left\vert R\right\vert }%
\int_{R}v^{-p^{\prime }}\right) ^{\frac{1}{p^{\prime }}}\ \left( \frac{1}{%
\left\vert R\right\vert }\int_{R}w^{q}\right) ^{\frac{1}{q}}<\infty .
\end{equation*}
\end{theorem}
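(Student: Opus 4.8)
The necessity of the displayed condition is the routine direction: fixing a rectangle $R=I_{1}\times\cdots\times I_{N}\in\mathcal{R}^{N}$ and testing the inequality on $f=\mathbf{1}_{R}v^{-p^{\prime }}$, and dually on $\mathbf{1}_{R}w^{q}$, one uses only that $\left\vert x_{j}-u_{j}\right\vert ^{\alpha -1}\ge \ell (I_{j})^{\alpha -1}$ on $R\times R$ (here $\alpha -1<0$) to bound the asserted supremum by the operator norm; no property of the weights is needed. So the content is in sufficiency. Write $\sigma =v^{-p^{\prime }}$, $\omega =w^{q}$ and $\langle h\rangle _{R}^{\mu }=\mu (R)^{-1}\int_{R}h\,d\mu $. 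By the substitution $f\mapsto f\sigma $ and duality, the claim is equivalent to $\int \widetilde{I}_{\alpha }^{N}(f\sigma )\,g\,d\omega \lesssim \left\Vert f\right\Vert _{L^{p}(\sigma )}\left\Vert g\right\Vert _{L^{q^{\prime }}(\omega )}$ for $f,g\ge 0$. First I would discretize: by the translated-dyadic-grid (``one-third'') trick applied in each of the $N$ coordinates, at the cost of a factor $3^{N}$, it suffices to bound, for a single product dyadic grid $\mathcal{D}^{N}=\mathcal{D}\times \cdots \times \mathcal{D}$, the positive dyadic form $\Lambda ^{\mathcal{D}}(f,g)=\sum_{R\in \mathcal{D}^{N}}K_{R}\,\sigma (R)\,\omega (R)\,\langle f\rangle _{R}^{\sigma }\langle g\rangle _{R}^{\omega }$, where $K_{R}$ is the dyadic kernel coefficient, normalized so that the hypothesis of the theorem reads $K_{R}\,\sigma (R)^{1/p^{\prime }}\omega (R)^{1/q}\le \mathcal{A}$ for every $R\in \mathcal{R}^{N}$, with $\mathcal{A}$ the supremum in the statement.

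Next I would carry out the Tanaka--Yabuta separation of the two measures. Bounding $K_{R}\sigma (R)\omega (R)\le \mathcal{A}\,\sigma (R)^{1/p}\omega (R)^{1/q^{\prime }}$ by the hypothesis and then applying H\"older's inequality in the summation variable $R$ with a pair of exponents $r,r^{\prime }$ for which $p<r<q$ gives
\[
\Lambda ^{\mathcal{D}}(f,g)\le \mathcal{A}\left( \sum_{R}\sigma (R)^{r/p}\left( \langle f\rangle _{R}^{\sigma }\right) ^{r}\right) ^{1/r}\left( \sum_{R}\omega (R)^{r^{\prime }/q^{\prime }}\left( \langle g\rangle _{R}^{\omega }\right) ^{r^{\prime }}\right) ^{1/r^{\prime }},
\]
and the two factors now depend only on $(\sigma ,f)$ and only on $(\omega ,g)$ respectively. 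Everything is thereby reduced to the two one-weight Carleson embeddings $\sum_{R}\sigma (R)^{r/p}\left( \langle f\rangle _{R}^{\sigma }\right) ^{r}\lesssim \left\Vert f\right\Vert _{L^{p}(\sigma )}^{r}$ and $\sum_{R}\omega (R)^{r^{\prime }/q^{\prime }}\left( \langle g\rangle _{R}^{\omega }\right) ^{r^{\prime }}\lesssim \left\Vert g\right\Vert _{L^{q^{\prime }}(\omega )}^{r^{\prime }}$; the reason for squeezing $r$ strictly between $p$ and $q$ is precisely that both ``testing'' exponents $r/p$ and $r^{\prime }/q^{\prime }$ then exceed $1$.

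To prove the first embedding (the second being symmetric, with $\omega ,q^{\prime },r^{\prime }$ in place of $\sigma ,p,r$), I would first verify the rectangular Carleson estimate $\sum_{R^{\prime }\subseteq R_{0}}\sigma (R^{\prime })^{r/p}\lesssim \sigma (R_{0})^{r/p}$ for every $R_{0}\in \mathcal{D}^{N}$: group the dyadic sub-rectangles $R^{\prime }$ of $R_{0}$ by their coordinatewise scaling profile relative to $R_{0}$; the sub-rectangles of a fixed profile tile $R_{0}$, so their $\sigma $-measures sum to $\sigma (R_{0})$, while rectangle reverse doubling of $\sigma $ --- equivalent, by the appendix, to the doubling hypothesis --- bounds each individual $\sigma (R^{\prime })$ by $\sigma (R_{0})$ times a factor geometric in the total scaling; since $r/p>1$, raising this to the power $r/p-1$ produces, after summing over profiles, a convergent geometric series with sum $\lesssim \sigma (R_{0})^{r/p}$. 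One then passes from this Carleson estimate to the embedding itself by iterating the classical one-parameter $(p,r)$-Carleson embedding one coordinate at a time. Recombining the two embeddings through the H\"older inequality above and summing over the $3^{N}$ grids completes the proof.

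The delicate point --- the step I expect to be the main obstacle --- is exactly this passage from a rectangular Carleson estimate to a genuine Carleson embedding in the multi-parameter setting: on product domains a merely rectangular Carleson condition does \emph{not} in general imply the corresponding embedding (the Chang--Fefferman phenomenon), so the iteration must really use both the one-weight structure (iteration is far more robust with one weight than with two, which is the whole point of separating the measures at the outset) and the rectangle doubling of $\sigma $ and $\omega $ --- essentially, good $L^{s}$ control, $s>1$, for the relevant strong maximal averaging --- in order to carry the one-parameter embedding through the coordinatewise induction. The remaining ingredients (the discretization, the use of the $A_{p,q}$-type hypothesis, and the elementary summation $\sum_{R^{\prime }\subseteq R_{0}}\sigma (R^{\prime })^{r/p}\lesssim \sigma (R_{0})^{r/p}$) are routine once the embedding is in hand.
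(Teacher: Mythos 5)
Your skeleton---necessity by testing on $\mathbf{1}_{R}v^{-p^{\prime }}$, the one-third-trick discretization in each coordinate, the H\"{o}lder separation of $\sigma $ from $\omega $ with an exponent $p<r<q$, and the reduction to two one-weight embeddings---is exactly the Tanaka--Yabuta route, which is also the backbone of this paper's arguments (the paper does not reprove the quoted theorem; it cites \cite{TaYa}, and its Theorem \ref{theorem rev doub} subsumes it under the weaker product reverse doubling hypothesis, where NTV good/bad grids replace the one-third trick because concentric reverse doubling only controls cubes well inside $P$; under your rectangle doubling hypothesis the one-third trick is indeed enough). The genuine gap is at the step you yourself flag: you never prove the bi-parameter embedding $\sum_{R}\sigma \left( R\right) ^{r/p}\left( \langle f\rangle _{R}^{\sigma }\right) ^{r}\lesssim \left\Vert f\right\Vert _{L^{p}\left( \sigma \right) }^{r}$. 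The rectangular Carleson estimate $\sum_{R^{\prime }\subset R_{0}}\sigma \left( R^{\prime }\right) ^{r/p}\lesssim \sigma \left( R_{0}\right) ^{r/p}$ that you do establish is not the input the argument actually uses, and---as you yourself observe via the Chang--Fefferman phenomenon---it is not sufficient on its own; saying that the coordinatewise iteration ``must really use'' the one-weight structure and the doubling is a description of the difficulty, not an argument.

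What closes this gap (see the Proposition proving Theorem \ref{variation}, whose unbumped analogue is what you need) is an iteration of the \emph{one-parameter embedding applied to auxiliary measures}, not to $\sigma $ itself. Writing $R=I\times J$ and $\sigma \left( I\times J\right) =\int_{I}\sigma _{x}\left( J\right) dx$ with $\sigma _{x}\left( J\right) =\int_{J}v^{-p^{\prime }}\left( x,y\right) dy$, one fixes $J$, applies the one-parameter $\left( p,r\right) $ embedding on $\mathbb{R}^{m}$ to the measure $dJ_{\sigma }\left( x\right) =\sigma _{x}\left( J\right) dx$ and the function $F^{J}\left( x\right) =\sigma _{x}\left( J\right) ^{-1}\int_{J}f\left( x,y\right) d\sigma _{x}\left( y\right) $; the one-parameter Carleson condition for $J_{\sigma }$, uniformly in $J$, comes from applying the halving (strong reverse doubling) estimate in the first coordinate only, i.e. $\sigma \left( I^{\prime }\times J\right) \leq \beta ^{k}\sigma \left( I_{0}\times J\right) $ for dyadic $I^{\prime }\subset I_{0}$ of depth $k$. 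One then uses Minkowski's inequality with exponent $r/p>1$ to interchange $\sum_{J}$ with $\int dx$, and finally applies the one-parameter embedding in the $y$ variable to the measures arising at that stage. Your ``coordinatewise iteration'' must be made precise at exactly this level: identify the measures to which the one-parameter lemma is applied at each stage and verify their Carleson conditions uniformly (in $J$, respectively in a.e. $x$); this is precisely where the rectangle doubling enters, and it is also why the paper's $\theta $-bump version is the cleanest case, since there the one-parameter Carleson condition (\ref{automatic}) is automatic for \emph{any} measure. Until that iteration is written out, your sufficiency argument is incomplete, although every step surrounding it (necessity, discretization, H\"{o}lder separation, and the rectangle-wise Carleson computation) is correct.
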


Thus the theorem of Tanaka and Yabuta extends the second assertion in
Theorem 1(B) of \cite{SaWh} to product fractional integrals with rectangle
doubling weights. The purpose of this paper is to extend both Theorem 1(A)
and the second assertion in Theorem 1(B) of \cite{SaWh} to product
fractional integrals of the form (more than two factors in the kernel are
handled similarly)%
\begin{equation*}
I_{\alpha ,\beta }^{m,n}f\left( x,y\right) \equiv \int_{\mathbb{R}^{m}}\int_{%
\mathbb{R}^{n}}\left\vert x-u\right\vert ^{\frac{\alpha }{m}-1}\left\vert
y-t\right\vert ^{\frac{\beta }{n}-1}f\left( u,t\right) dtdu,\ \ \ \ \ \left(
x,y\right) \in \mathbb{R}^{m}\times \mathbb{R}^{n},
\end{equation*}%
more precisely, to show that a product $\theta $-bump condition is always
sufficient for the norm inequality, and that the same condition without a
bump is sufficient provided the weights $v^{-p^{\prime }}$ and $w^{q}$ are
product reverse doubling on $\mathbb{R}^{m}\times \mathbb{R}^{n}$ in this
sense: a weight $\mu $ is \emph{product reverse doubling} on $\mathbb{R}%
^{m}\times \mathbb{R}^{n}$ if there are $C,\varepsilon _{1},\varepsilon
_{2}>0$ such that%
\begin{equation}
\left\vert \left( 2^{-s}I\right) \times \left( 2^{-t}J\right) \right\vert
_{\mu }\leq C2^{-\varepsilon _{1}s-\varepsilon _{2}t}\left\vert I\times
J\right\vert _{\mu }\ ,\ \ \ \ \ \text{for all }s,t>0\text{ and cubes }%
I\subset \mathbb{R}^{m}\text{ and }J\subset \mathbb{R}^{n}.
\label{product rev doub}
\end{equation}

Our proof of the first result adapts the Tanaka-Yabuta argument to the $%
\theta $-bump functional used in \cite{SaWh}, while the second result
regarding reverse doubling weights adapts the Tanaka-Yabuta argument to the
use of NTV good/bad grids in place of the Str\"{o}mberg $\frac{1}{3}$-trick
that was used in \cite{TaYa}. Additional results for the product situation
can be found in our paper \cite{SaWa}. See the appendix below for a
discussion of the doubling and various reverse doubling conditions.

\begin{acknowledgement}
We are grateful to Hitoshi Tanaka for bringing our attention to his
beautiful paper \cite{TaYa} with K. Yabuta.
\end{acknowledgement}

\subsection{Preliminaries}

Let $\mathcal{D}^{m}$ denote the grid of dyadic cubes in $\mathbb{R}^{m}$,
and let $\mathcal{R}^{m,n}\equiv \mathcal{D}^{m}\times \mathcal{D}^{n}$
denote the partial grid of dyadic rectangles in $\mathbb{R}^{m}\times 
\mathbb{R}^{n}$ (which is not actually a grid since it fails the nested
property). For $d\mu \left( x\right) =u\left( x\right) dx$ absolutely
continuous with respect to Lebesgue measure on $\mathbb{R}^{N}$, we will use
the following $\theta $-bump functional for a cube $Q$ and $\theta >1$ (\cite%
[see page 830]{SaWh}):%
\begin{equation*}
\left\vert Q\right\vert _{\mu ,\theta }\equiv \left\vert Q\right\vert ^{%
\frac{1}{\theta ^{\prime }}}\left( \int_{Q}u^{\theta }\right) ^{\frac{1}{%
\theta }}.
\end{equation*}%
We have $\left\vert Q\right\vert _{\mu }\leq \left\vert Q\right\vert _{\mu
,\theta }$, and if $P=\overset{\cdot }{\bigcup }_{i=1}^{\infty }Q_{i}$ is a
pairwise disjoint union of the cubes $Q_{i}$, then we have%
\begin{equation*}
\sum_{i=1}^{\infty }\left\vert Q_{i}\right\vert _{\mu ,\theta
}=\sum_{i=1}^{\infty }\left\vert Q\right\vert ^{\frac{1}{\theta ^{\prime }}%
}\left( \int_{Q}u^{\theta }\right) ^{\frac{1}{\theta }}\leq \left(
\sum_{i=1}^{\infty }\left\vert Q_{i}\right\vert \right) ^{\frac{1}{\theta
^{\prime }}}\left( \sum_{i=1}^{\infty }\int_{Q_{i}}u^{\theta }\right) ^{%
\frac{1}{\theta }}=\left\vert P\right\vert ^{\frac{1}{\theta ^{\prime }}%
}\left( \int_{P}u^{\theta }\right) ^{\frac{1}{\theta }}=\left\vert
P\right\vert _{\mu ,\theta }\ .
\end{equation*}

The important property of the $\theta $-bump functional on cubes for us is
that, when taken to a power larger than $1$, it automatically satisfies a
Carleson condition taken over all dyadic subcubes. More precisely, if $\rho
>1$, then%
\begin{eqnarray}
\sum_{Q\in \mathcal{D}^{N}:\ Q\subset P}\left\vert Q\right\vert _{\mu
,\theta }^{\rho } &=&\sum_{k=-\infty }^{\infty }\sum_{Q\in \mathcal{D}^{N}:\
\ell \left( Q\right) =2^{-k}\ell \left( P\right) }\left\vert Q\right\vert ^{%
\frac{\rho -1}{\theta ^{\prime }}}\left( \int_{Q}u^{\theta }\right) ^{\frac{%
\rho -1}{\theta }}\left\vert Q\right\vert _{\mu ,\theta }  \label{automatic}
\\
&\leq &\sum_{k=-\infty }^{\infty }\sum_{Q\in \mathcal{D}^{N}:\ \ell \left(
Q\right) =2^{-k}\ell \left( P\right) }\left( C2^{-kN\varepsilon }\left\vert
P\right\vert \right) ^{\frac{\rho -1}{\theta ^{\prime }}}\left(
\int_{P}u^{\theta }\right) ^{\frac{\rho -1}{\theta }}\left\vert Q\right\vert
_{\mu ,\theta }  \notag \\
&\leq &\sum_{k=-\infty }^{\infty }\left( C2^{-kN\varepsilon }\left\vert
P\right\vert \right) ^{\frac{\rho -1}{\theta ^{\prime }}}\left(
\int_{P}u^{\theta }\right) ^{\frac{\rho -1}{\theta }}\left\vert P\right\vert
_{\mu ,\theta }=C_{N\varepsilon \frac{\rho -1}{\theta ^{\prime }}}\left\vert
P\right\vert _{\mu ,\theta }^{\rho }\ .  \notag
\end{eqnarray}%
This automatic Carleson condition leads to a corresponding automatic
Carleson embedding lemma.

\begin{lemma}
\label{theta bump lemma}Suppose that $1<s<r<\infty $, $\theta >1$, and that $%
d\mu \left( x\right) =u\left( x\right) dx$ is a locally $L^{\theta }$
absolutely continuous measure on $\mathbb{R}^{N}$. Then we have%
\begin{equation*}
\left\{ \sum_{Q\in \mathcal{D}^{N}}\left\vert Q\right\vert _{\mu ,\theta }^{%
\frac{r}{s}}\left( \frac{1}{\left\vert Q\right\vert _{\mu ,\theta }}%
\int_{Q}fd\mu \right) ^{r}\right\} ^{\frac{1}{r}}\leq C_{r,s,\theta
}\left\Vert f\right\Vert _{L^{s}\left( \mu \right) }\ ,\ \ \ \ \ f\geq 0.
\end{equation*}
\end{lemma}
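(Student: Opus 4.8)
The plan is to run a Calderón--Zygmund stopping-time decomposition adapted to the $\theta $-bump functional, using the automatic Carleson estimate \eqref{automatic} to swallow all ``subcube'' contributions, thereby reducing the lemma to a single one-variable summation inequality over the level sets of an auxiliary maximal operator.

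First I would make the standard reductions: by monotone convergence and density it is enough to treat $f\ge 0$ bounded with compact support, and it suffices to bound the sum restricted to the dyadic subcubes of an arbitrary large cube $Q_{0}\in \mathcal D^{N}$ with a constant independent of $Q_{0}$, letting $Q_{0}\uparrow \mathbb R^{N}$ at the end (this guarantees that the stopping cubes below are well defined). Introduce the dyadic $\theta $-bump maximal operator
\[
\mathfrak M f(x)\equiv \sup_{Q\in \mathcal D^{N}:\,Q\ni x}\frac{1}{\left\vert Q\right\vert _{\mu ,\theta }}\int_{Q}f\,d\mu .
\]
Because $\left\vert Q\right\vert _{\mu }\le \left\vert Q\right\vert _{\mu ,\theta }$ one has $\mathfrak M f\le M_{\mu }^{\mathcal D}f$ pointwise, where $M_{\mu }^{\mathcal D}$ is the ordinary dyadic maximal operator for $\mu $; since the weak-type $(1,1)$ and the trivial $L^{\infty }$ bound for $M_{\mu }^{\mathcal D}$ require no regularity of $\mu $, interpolation gives $\mathfrak M:L^{s}(\mu )\to L^{s}(\mu )$ for $s>1$. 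Writing $\left\vert Q\right\vert _{\mu ,\theta }^{-1}\int_{Q}f\,d\mu =\big(\tfrac{1}{\left\vert Q\right\vert }\int_{Q}fu\big)\big(\tfrac{1}{\left\vert Q\right\vert }\int_{Q}u^{\theta }\big)^{-1/\theta }$ and applying the dyadic Lebesgue differentiation theorem to $fu,u^{\theta }\in L_{\mathrm{loc}}^{1}$, one also gets $\mathfrak M f\ge f$ $\mu $-a.e.

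Next, for each $k\in \mathbb Z$ let $\{Q_{j}^{k}\}_{j}$ be the maximal dyadic cubes $Q$ (inside $Q_{0}$) with $\left\vert Q\right\vert _{\mu ,\theta }^{-1}\int_{Q}f\,d\mu >2^{k}$, so that $\Omega _{k}\equiv \bigcup _{j}Q_{j}^{k}=\{\mathfrak M f>2^{k}\}$ and every cube $Q$ with $\left\vert Q\right\vert _{\mu ,\theta }^{-1}\int_{Q}f\,d\mu >2^{k}$ lies in some $Q_{j}^{k}$. Splitting the cubes occurring in the sum according to the integer $k$ with $2^{k}<\left\vert Q\right\vert _{\mu ,\theta }^{-1}\int_{Q}f\,d\mu \le 2^{k+1}$ and then applying \eqref{automatic} with $\rho =r/s>1$ inside each $Q_{j}^{k}$ gives
\[
\sum_{Q\in \mathcal D^{N}}\left\vert Q\right\vert _{\mu ,\theta }^{\frac rs}\Big(\frac{1}{\left\vert Q\right\vert _{\mu ,\theta }}\int_{Q}f\,d\mu \Big)^{r}\ \lesssim \ \sum_{k}2^{kr}\sum_{j}\left\vert Q_{j}^{k}\right\vert _{\mu ,\theta }^{\frac rs}.
\]
On each $Q_{j}^{k}$ the stopping condition gives $\left\vert Q_{j}^{k}\right\vert _{\mu ,\theta }<2^{-k}\int_{Q_{j}^{k}}f\,d\mu $, and since $r/s>1$ and the $Q_{j}^{k}$ are pairwise disjoint, $\sum_{j}\big(\int_{Q_{j}^{k}}f\,d\mu \big)^{r/s}\le \big(\sum_{j}\int_{Q_{j}^{k}}f\,d\mu \big)^{r/s}=\big(\int_{\Omega _{k}}f\,d\mu \big)^{r/s}$; hence the right-hand side above is $\lesssim \sum_{k}2^{kr/s^{\prime }}\big(\int_{\Omega _{k}}f\,d\mu \big)^{r/s}$. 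Using $f\le \mathfrak M f$ $\mu $-a.e.\ and $\Omega _{k}\subseteq \{\mathfrak M f>2^{k}\}$, and setting $g\equiv \mathfrak M f\in L^{s}(\mu )$, the lemma is reduced --- via the $L^{s}(\mu )$-boundedness of $\mathfrak M$ --- to the one-variable inequality
\[
\sum_{k\in \mathbb Z}2^{kr/s^{\prime }}\Big(\int_{\{g>2^{k}\}}g\,d\mu \Big)^{r/s}\ \le \ C_{r,s}\,\left\Vert g\right\Vert _{L^{s}(\mu )}^{r}.
\]

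This last estimate is the crux, and the only place where the strict inequality $s<r$ (together with $s>1$) is used. I would prove it as follows: bound $\int_{\{g>2^{k}\}}g\,d\mu \le 2\sum_{m\ge k}2^{m}\mu (\{g>2^{m}\})$, apply H\"older's inequality in the index $m$ with exponent $r/s$ against the geometric weight $2^{m\sigma }$ for a fixed $\sigma \in (0,s-1)$ --- the interval is nonempty precisely because $s>1$ --- interchange the sums in $k$ and $m$, and carry out the resulting geometric sum in $k$, which converges exactly because $\sigma <s-1=s/s^{\prime }$. The powers of $2^{m}$ recombine (using $\tfrac 1s+\tfrac 1{s^{\prime }}=1$) to leave $C_{r,s,\sigma }\sum_{m}\big(2^{ms}\mu (\{g>2^{m}\})\big)^{r/s}\le C_{r,s,\sigma }\big(\sum_{m}2^{ms}\mu (\{g>2^{m}\})\big)^{r/s}$, where $r/s>1$ furnishes the embedding $\ell ^{1}\hookrightarrow \ell ^{r/s}$; and the final sum is $\lesssim \left\Vert g\right\Vert _{L^{s}(\mu )}^{s}$ by the usual layer-cake comparison, yielding the claim. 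Everything beyond this exponent bookkeeping is the routine theory of dyadic stopping times; the entire role of the $\theta $-bump is to supply, through \eqref{automatic}, the Carleson packing that validates the passage to $\sum_{j}\left\vert Q_{j}^{k}\right\vert _{\mu ,\theta }^{r/s}$ without any doubling or reverse-doubling hypothesis on $\mu $.
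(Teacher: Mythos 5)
Your argument is correct, and its first half coincides with the paper's: you form, for each $k\in\mathbb{Z}$, the maximal dyadic cubes on which the bump average exceeds $2^{k}$, and you control the subcube contributions by the automatic Carleson packing (\ref{automatic}) with exponent $\rho=r/s>1$, arriving at $\sum_{k}2^{kr}\sum_{j}\left\vert Q_{j}^{k}\right\vert _{\mu ,\theta }^{r/s}$ exactly as in the paper. The two proofs then part ways. The paper sharpens the stopping inequality: since $\left\vert M\right\vert _{\mu }\leq \left\vert M\right\vert _{\mu ,\theta }$, one has $\int_{M_{i}^{k}\cap \{f>2^{k-1}\}}f\,d\mu \geq 2^{k-1}\left\vert M_{i}^{k}\right\vert _{\mu ,\theta }$, and this localization in both $i$ and level allows the power $r/s$ to be pulled outside the \emph{whole} double sum in $(k,i)$ by $\ell ^{1}\subset \ell ^{r/s}$, after which $\sum_{k}2^{k(s-1)}\int_{\{f>2^{k-1}\}}f\,d\mu \lesssim \|f\|_{L^{s}(\mu )}^{s}$ follows from a single Fubini; no maximal function theory enters. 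You instead keep only the crude bound $\left\vert Q_{j}^{k}\right\vert _{\mu ,\theta }<2^{-k}\int_{Q_{j}^{k}}f\,d\mu $, pull the power out only in $j$ for fixed $k$, and then pay for the remaining $k$-sum with two imported tools: the $L^{s}(\mu )$-boundedness of the dyadic maximal operator for an arbitrary locally finite measure (via $\mathfrak{M}f\leq M_{\mu }^{\mathcal{D}}f$ and $f\leq \mathfrak{M}f$ $\mu$-a.e.), and a discrete Hardy-type inequality $\sum_{k}2^{kr/s^{\prime }}\bigl(\int_{\{g>2^{k}\}}g\,d\mu \bigr)^{r/s}\lesssim \|g\|_{L^{s}(\mu )}^{r}$, whose proof by H\"{o}lder against the weight $2^{m\sigma }$, $0<\sigma <s-1$, is correct as you sketch it. So your route is valid but less self-contained; note that the paper's level-set subtraction trick would let you dispense with both the maximal function and the Hardy lemma, and even within your scheme you could shortcut the Hardy step by summing $2^{k(s-1)}\int_{\Omega _{k}}f\,d\mu $ over $k$ via Fubini and H\"{o}lder against $(\mathfrak{M}f)^{s-1}$. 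One small repair: an increasing sequence of dyadic cubes $Q_{0}$ never exhausts $\mathbb{R}^{N}$, so your preliminary reduction should instead truncate in scale (or treat each of the $2^{N}$ dyadic quadrant towers separately); alternatively, for $f$ bounded with compact support the maximal stopping cubes already exist globally, as the paper implicitly uses.
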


\begin{proof}
The cubes in $\mathcal{D}^{N}$ form a grid, and so for each integer $k\in 
\mathbb{Z}$, we can consider the maximal dyadic cubes $\left\{
M_{i}^{k}\right\} _{i=1}^{\infty }$ from $\mathcal{D}^{N}$ such that%
\begin{equation*}
\frac{1}{\left\vert M_{i}^{k}\right\vert _{\mu ,\theta }}\int_{M_{i}^{k}}fd%
\mu >2^{k}.
\end{equation*}%
Then we can estimate using (\ref{automatic}) that%
\begin{eqnarray*}
&&\sum_{Q\in \mathcal{D}^{N}}\left\vert Q\right\vert _{\mu ,\theta }^{\frac{r%
}{s}}\left( \frac{1}{\left\vert Q\right\vert _{\mu ,\theta }}\int_{Q}fd\mu
\right) ^{r}\leq \sum_{k=-\infty }^{\infty }\sum_{\substack{ Q\in \mathcal{D}%
^{N}  \\ 2^{k}<\frac{1}{\left\vert Q\right\vert _{\mu ,\theta }}%
\int_{Q}fd\mu \leq 2^{k+1}}}\left\vert Q\right\vert _{\mu ,\theta }^{\frac{r%
}{s}}\left( 2^{k+1}\right) ^{r} \\
&\leq &\sum_{k=-\infty }^{\infty }\ \sum_{i=1}^{\infty }\sum_{Q\in \mathcal{D%
}^{N}:\ Q\subset M_{i}^{k}}\left\vert Q\right\vert _{\mu ,\theta }^{\frac{r}{%
s}}\left( 2^{k+1}\right) ^{r} \\
&=&2^{r}\sum_{k=-\infty }^{\infty }\ \sum_{i=1}^{\infty }\left\{ \sum_{Q\in 
\mathcal{D}^{N}:\ Q\subset M_{i}^{k}}\left\vert Q\right\vert _{\mu ,\theta
}^{\frac{r}{s}}\right\} 2^{kr}\leq 2^{r}C_{N\varepsilon \frac{\frac{r}{s}-1}{%
\theta ^{\prime }}}\sum_{k=-\infty }^{\infty }\ \sum_{i=1}^{\infty
}\left\vert M_{i}^{k}\right\vert _{\mu ,\theta }^{\frac{r}{s}}2^{kr}\ .
\end{eqnarray*}%
Now we use the fact that 
\begin{eqnarray*}
\frac{1}{\left\vert M_{i}^{k}\right\vert _{\mu ,\theta }}\int_{M_{i}^{k}\cap
\left\{ f>2^{k-1}\right\} }fd\mu &=&\frac{1}{\left\vert M_{i}^{k}\right\vert
_{\mu ,\theta }}\int_{M_{i}^{k}}fd\mu -\frac{1}{\left\vert
M_{i}^{k}\right\vert _{\mu ,\theta }}\int_{M_{i}^{k}\cap \left\{ f\leq
2^{k-1}\right\} }fd\mu \\
&\geq &\frac{1}{\left\vert M_{i}^{k}\right\vert _{\mu ,\theta }}%
\int_{M_{i}^{k}}fd\mu -\frac{1}{\left\vert M_{i}^{k}\right\vert _{\mu
,\theta }}\int_{M_{i}^{k}}2^{k-1}d\mu \\
&>&2^{k}-2^{k-1}\frac{\left\vert M_{i}^{k}\right\vert _{\mu }}{\left\vert
M_{i}^{k}\right\vert _{\mu ,\theta }}\geq 2^{k-1},
\end{eqnarray*}%
to obtain%
\begin{eqnarray*}
\sum_{Q\in \mathcal{D}^{N}}\left\vert Q\right\vert _{\mu ,\theta }^{\frac{r}{%
s}}\left( \frac{1}{\left\vert Q\right\vert _{\mu ,\theta }}\int_{Q}fd\mu
\right) ^{r} &\leq &2^{r}C_{N\varepsilon \frac{\frac{r}{s}-1}{\theta
^{\prime }}}\sum_{k=-\infty }^{\infty }\sum_{i=1}^{\infty }\left\vert
M_{i}^{k}\right\vert _{\mu ,\theta }^{\frac{r}{s}}2^{kr} \\
&\leq &C_{r,s,\theta }^{r}\sum_{k=-\infty }^{\infty }\sum_{i=1}^{\infty
}\left( 2^{-k}\int_{M_{i}^{k}\cap \left\{ f>2^{k-1}\right\} }fd\mu \right) ^{%
\frac{r}{s}}2^{kr} \\
&\leq &C_{r,s,\theta }^{r}\left( \sum_{k=-\infty }^{\infty
}\sum_{i=1}^{\infty }2^{k\left( s-1\right) }\int_{M_{i}^{k}\cap \left\{
f>2^{k-1}\right\} }fd\mu \right) ^{\frac{r}{s}}.
\end{eqnarray*}%
We now use that the cubes $\left\{ M_{i}^{k}\right\} _{i=1}^{\infty }$ are
pairwise disjoint in $i$ to continue with the estimate%
\begin{eqnarray*}
\left( \sum_{k=-\infty }^{\infty }\sum_{i=1}^{\infty }2^{k\left( s-1\right)
}\int_{M_{i}^{k}\cap \left\{ f>2^{k-1}\right\} }fd\mu \right) ^{\frac{r}{s}}
&\leq &\left( \sum_{k=-\infty }^{\infty }2^{k\left( s-1\right)
}\int_{\left\{ f>2^{k-1}\right\} }fd\mu \right) ^{\frac{r}{s}} \\
&=&\left( \int \left\{ \sum_{k\in \mathbb{Z}:\ 2^{k}<2f\left( x\right)
}2^{k\left( s-1\right) }\right\} f\left( x\right) d\mu \left( x\right)
\right) ^{\frac{r}{s}} \\
&\leq &C_{s}\left( \int f\left( x\right) ^{\left( s-1\right) }f\left(
x\right) d\mu \left( x\right) \right) ^{\frac{r}{s}} \\
&=&C_{s}\left( \int f\left( x\right) ^{s}d\mu \left( x\right) \right) ^{%
\frac{r}{s}}=C_{s}\left\Vert f\right\Vert _{L^{s}\left( \mu \right) }^{r}\ .
\end{eqnarray*}
\end{proof}

\section{The $2$-parameter theory}

Here we state and prove our extensions of Theorem 1(A) and the second
assertion of Theorem 1(B) in \cite{SaWh}. We begin with the $\theta $-bump
condition.

\subsection{The $\protect\theta $-bump condition for bilinear embeddings}

Here is a variation on the Tanaka-Yabuta theorem \cite[Theorem 1.1]{TaYa}
involving general weights that satisfy a $\theta $-bump analogue of the
`rectangle testing' condition in \cite{TaYa}. We extend the definition of
the $\theta $-bump functional to rectangles in the obvious way,%
\begin{equation*}
\left\vert R\right\vert _{\mu ,\theta }\equiv \left\vert R\right\vert ^{%
\frac{1}{\theta ^{\prime }}}\left( \int_{R}u^{\theta }\right) ^{\frac{1}{%
\theta }},
\end{equation*}%
for $d\mu \left( x,y\right) =u\left( x,y\right) dxdy$ absolutely continuous
and $R$ a rectangle in $\mathbb{R}^{m}\times \mathbb{R}^{n}$.

\begin{theorem}
\label{variation}Suppose $1<p<q<\infty $. Let $d\sigma =v^{-p^{\prime }}dx$
and $d\omega =w^{q}dx$ be locally finite absolutely continuous weights on $%
\mathbb{R}^{m}\times \mathbb{R}^{n}$, let $\theta >1$, and let $K:\mathcal{R}%
^{m,n}\rightarrow \left[ 0,\infty \right) $. Then the norm $\mathbb{N}%
_{K}\left( \sigma ,\omega \right) $ of the positive bilinear inequality,%
\begin{equation*}
\sum_{R\in \mathcal{R}^{m,n}}K\left( R\right) \left( \int_{R}fd\sigma
\right) \left( \int_{R}gd\omega \right) \leq \mathbb{N}_{K}\left( \sigma
,\omega \right) \ \left\Vert f\right\Vert _{L^{p}\left( \sigma \right)
}\left\Vert g\right\Vert _{L^{q^{\prime }}\left( \omega \right) }\ ,\ \ \ \
\ f,g\geq 0,
\end{equation*}%
is finite independent of all partial grids $\mathcal{R}^{m,n}=\mathcal{D}%
^{m}\times \mathcal{D}^{n}$ \emph{if} the $\theta $-bump product
characteristic $\mathbb{A}_{K,\theta }\left( \sigma ,\omega \right) $ is
finite, where%
\begin{eqnarray*}
\mathbb{A}_{K,\theta }\left( \sigma ,\omega \right) &\equiv &\sup_{R\in 
\mathcal{R}^{m,n}}K\left( R\right) \ \left[ \left\vert R\right\vert ^{\frac{1%
}{p^{\prime }\theta ^{\prime }}}\left( \int_{R}v^{-p^{\prime }\theta
}d\sigma \right) ^{\frac{1}{p^{\prime }\theta }}\right] \ \left[ \left\vert
R\right\vert ^{\frac{1}{q\theta ^{\prime }}}\left( \int_{R}w^{q\theta
}d\omega \right) ^{\frac{1}{q\theta }}\right] \\
&=&\sup_{R\in \mathcal{R}^{m,n}}K\left( R\right) \ \left\vert R\right\vert
_{\omega ,\theta }^{\frac{1}{q}}\ \left\vert R\right\vert _{\sigma ,\theta
}^{\frac{1}{p^{\prime }}}\ .
\end{eqnarray*}
\end{theorem}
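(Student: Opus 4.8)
The plan is to follow the Tanaka--Yabuta iteration, with the $\theta$-bump functionals $\left\vert R\right\vert _{\sigma ,\theta }$, $\left\vert R\right\vert _{\omega ,\theta }$ in the role played in \cite{TaYa} by rectangle testing quantities and with Lemma \ref{theta bump lemma} in the role of their one-weight Carleson embedding. First I would absorb $K$: by definition of $\mathbb{A}_{K,\theta }\left( \sigma ,\omega \right) $ one has $K\left( R\right) \le \mathbb{A}_{K,\theta }\left( \sigma ,\omega \right) \left\vert R\right\vert _{\omega ,\theta }^{-1/q}\left\vert R\right\vert _{\sigma ,\theta }^{-1/p^{\prime }}$ for every $R\in \mathcal{R}^{m,n}$, and, writing $\langle h\rangle _{R,\mu ,\theta }:=\left\vert R\right\vert _{\mu ,\theta }^{-1}\int_{R}h\,d\mu $, one has the identities $\left\vert R\right\vert _{\sigma ,\theta }^{-1/p^{\prime }}\int_{R}f\,d\sigma =\left\vert R\right\vert _{\sigma ,\theta }^{1/p}\langle f\rangle _{R,\sigma ,\theta }$ and $\left\vert R\right\vert _{\omega ,\theta }^{-1/q}\int_{R}g\,d\omega =\left\vert R\right\vert _{\omega ,\theta }^{1/q^{\prime }}\langle g\rangle _{R,\omega ,\theta }$. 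Hence the left side of the asserted inequality is at most $\mathbb{A}_{K,\theta }\left( \sigma ,\omega \right) \sum_{R}\left( \left\vert R\right\vert _{\sigma ,\theta }^{1/p}\langle f\rangle _{R,\sigma ,\theta }\right) \left( \left\vert R\right\vert _{\omega ,\theta }^{1/q^{\prime }}\langle g\rangle _{R,\omega ,\theta }\right) $, and, fixing $r$ with $p<r<q$ and applying H\"{o}lder in the sum over $R$ with exponents $r,r^{\prime }$ (note $1<p<r$ and $1<q^{\prime }<r^{\prime }$), this is bounded by
\begin{equation*}
\mathbb{A}_{K,\theta }\left( \sigma ,\omega \right) \left( \sum_{R\in \mathcal{R}^{m,n}}\left\vert R\right\vert _{\sigma ,\theta }^{r/p}\langle f\rangle _{R,\sigma ,\theta }^{r}\right) ^{1/r}\left( \sum_{R\in \mathcal{R}^{m,n}}\left\vert R\right\vert _{\omega ,\theta }^{r^{\prime }/q^{\prime }}\langle g\rangle _{R,\omega ,\theta }^{r^{\prime }}\right) ^{1/r^{\prime }}.
\end{equation*}
This is the point of the Tanaka--Yabuta trick: $\sigma $ and $\omega $ are now decoupled and each factor is a one-weight quantity.

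So the theorem reduces to the following product analogue of Lemma \ref{theta bump lemma}: \emph{if $1<s<r<\infty $, $\theta >1$ and $d\mu =u\,dx\,dy$ is locally $L^{\theta }$ on $\mathbb{R}^{m}\times \mathbb{R}^{n}$, then $\sum_{R\in \mathcal{R}^{m,n}}\left\vert R\right\vert _{\mu ,\theta }^{r/s}\langle f\rangle _{R,\mu ,\theta }^{r}\le C_{r,s,\theta ,m,n}\left\Vert f\right\Vert _{L^{s}\left( \mu \right) }^{r}$ for all $f\ge 0$}. Applying this with $\left( s,\mu \right) =\left( p,\sigma \right) $ to the first factor and $\left( s,\mu \right) =\left( q^{\prime },\omega \right) $ to the second, and recombining, gives $\mathbb{N}_{K}\left( \sigma ,\omega \right) \le C\,\mathbb{A}_{K,\theta }\left( \sigma ,\omega \right) $; as the embedding constant is independent of the choice of $\mathcal{D}^{m}\times \mathcal{D}^{n}$, so is the resulting bound, which is the form asserted.

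For the product embedding the plan is to iterate Lemma \ref{theta bump lemma} over the two genuine grids $\mathcal{D}^{m}$ and $\mathcal{D}^{n}$. The algebraic engine is the factorization of the $\theta$-bump functional: with $R=I\times J$ and $u_{I}\left( y\right) :=\left( \left\vert I\right\vert ^{-1}\int_{I}u\left( x,y\right) ^{\theta }dx\right) ^{1/\theta }$ one has $\left\vert I\times J\right\vert _{\mu ,\theta }=\left\vert I\right\vert \cdot \left\vert J\right\vert _{\mu _{I},\theta }$ for $d\mu _{I}\left( y\right) =u_{I}\left( y\right) dy$ on $\mathbb{R}^{n}$, together with the elementary bound $\int_{I}u\left( \cdot ,y\right) dx\le \left\vert I\right\vert u_{I}\left( y\right) $ (H\"{o}lder in $x$ with exponents $\theta ,\theta ^{\prime }$). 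Freezing $I$ and summing over $J\in \mathcal{D}^{n}$ turns the inner sum into $\left\vert I\right\vert ^{-r/s^{\prime }}\sum_{J}\left\vert J\right\vert _{\mu _{I},\theta }^{r/s}\langle h_{I}\rangle _{J,\mu _{I},\theta }^{r}$ with $h_{I}=F_{I}/u_{I}$ and $F_{I}\left( y\right) =\int_{I}f\left( x,y\right) u\left( x,y\right) dx$, which is exactly the shape covered by the one-parameter Lemma \ref{theta bump lemma} on $\mathbb{R}^{n}$; the outer sum over $I\in \mathcal{D}^{m}$ is then dealt with by a second application of Lemma \ref{theta bump lemma} on $\mathbb{R}^{m}$. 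Both applications are available because the automatic Carleson estimate (\ref{automatic}) holds for $\mathcal{D}^{m}$ and $\mathcal{D}^{n}$ separately and, on combining the factorization twice, delivers the product form $\sum_{R^{\prime }\in \mathcal{R}^{m,n}:\ R^{\prime }\subset R}\left\vert R^{\prime }\right\vert _{\mu ,\theta }^{\rho }\le C\left\vert R\right\vert _{\mu ,\theta }^{\rho }$ for $\rho >1$.

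The main obstacle is carrying out this iteration correctly. A naive two-stage application loses control: collapsing the inner $J$-sum (via Lemma \ref{theta bump lemma} and one more H\"{o}lder in $x$) to $\left( \int_{I\times \mathbb{R}^{n}}f^{s}\,d\mu \right) ^{r/s}$ yields $\sum_{I\in \mathcal{D}^{m}}\left( \int_{I\times \mathbb{R}^{n}}f^{s}\,d\mu \right) ^{r/s}$, which diverges since $r/s>1$ and each point lies in infinitely many dyadic $I$; this is the familiar biparameter (Chang--Fefferman) obstruction. Following \cite{TaYa}, one instead does not collapse the inner sum but retains enough of its stopping-time structure --- the relevant $\theta$-bump maximal function in the $y$-variable together with a residual Carleson sequence indexed in the first variable, controlled by the product automatic Carleson estimate above --- so that the outer application of Lemma \ref{theta bump lemma} is fed a convergent quantity. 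The failure of nestedness of $\mathcal{R}^{m,n}$ in the stopping-time steps is handled as in \cite{TaYa}, via the Str\"{o}mberg $\frac{1}{3}$-trick, and all constants come out independent of the partial grid. Working out this bookkeeping --- in particular choosing the intermediate maximal function and verifying the residual first-variable Carleson condition --- is where the substance of the argument lies.
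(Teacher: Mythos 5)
Your first reduction coincides with the paper's: you absorb $K$ using $\mathbb{A}_{K,\theta }\left( \sigma ,\omega \right) $, fix $p<r<q$, and apply H\"{o}lder with exponents $r,r^{\prime }$ to decouple $\sigma $ and $\omega $, reducing the theorem to the biparameter embedding $\sum_{R\in \mathcal{R}^{m,n}}\left\vert R\right\vert _{\mu ,\theta }^{r/s}\left( \left\vert R\right\vert _{\mu ,\theta }^{-1}\int_{R}fd\mu \right) ^{r}\lesssim \left\Vert f\right\Vert _{L^{s}\left( \mu \right) }^{r}$ for $1<s<r$. But you do not prove this embedding, and it is exactly where the proof of Theorem \ref{variation} does its work. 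You correctly diagnose that a naive collapse of one sum produces the divergent quantity $\sum_{I\in \mathcal{D}^{m}}\left( \int_{I\times \mathbb{R}^{n}}f^{s}d\mu \right) ^{r/s}$, but then you defer the fix to an unspecified stopping-time/maximal-function scheme with a ``residual Carleson sequence'' and the Str\"{o}mberg $\frac{1}{3}$-trick, and you say explicitly that working this out is where the substance lies. That is a genuine gap: the key step is asserted, not carried out, and the route you sketch is not the one that works.

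The actual argument needs nothing beyond Lemma \ref{theta bump lemma} and Minkowski's integral inequality, applied in the right order. Writing $J_{\mu ,\theta }\left( x\right) =\left\vert J\right\vert _{\mu _{x},\theta }$, one has the slicing identity $\left\vert I\times J\right\vert _{\mu ,\theta }=\left\vert I\right\vert _{J_{\mu ,\theta },\theta }$ (your factorization $\left\vert I\right\vert \cdot \left\vert J\right\vert _{\mu _{I},\theta }$ is the same identity with the variables exchanged and a different normalization). Hence for each fixed $J$ the sum over $I$ is a one-parameter embedding for the measure $J_{\mu ,\theta }\left( x\right) dx$ applied to $F^{J}\left( x\right) =\left\vert J\right\vert _{\mu _{x},\theta }^{-1}\int_{J}f\left( x,y\right) u\left( x,y\right) dy$, and Lemma \ref{theta bump lemma} bounds it by $\left( \int_{\mathbb{R}^{m}}F^{J}\left( x\right) ^{s}J_{\mu ,\theta }\left( x\right) dx\right) ^{r/s}$. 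The decisive step is then Minkowski with exponent $r/s>1$:
\begin{equation*}
\left\{ \sum_{J\in \mathcal{D}^{n}}\left( \int_{\mathbb{R}^{m}}F^{J}\left( x\right) ^{s}J_{\mu ,\theta }\left( x\right) dx\right) ^{\frac{r}{s}}\right\} ^{\frac{s}{r}}\leq \int_{\mathbb{R}^{m}}\left\{ \sum_{J\in \mathcal{D}^{n}}J_{\mu ,\theta }\left( x\right) ^{\frac{r}{s}}F^{J}\left( x\right) ^{r}\right\} ^{\frac{s}{r}}dx,
\end{equation*}
after which, for a.e.\ fixed $x$, the inner sum is again exactly Lemma \ref{theta bump lemma}, now on $\mathbb{R}^{n}$ with the measure $\mu _{x}$ (since $J_{\mu ,\theta }\left( x\right) =\left\vert J\right\vert _{\mu _{x},\theta }$), and is bounded by $\left( \int_{\mathbb{R}^{n}}f\left( x,y\right) ^{s}u\left( x,y\right) dy\right) ^{r/s}$; integrating in $x$ gives $\left\Vert f\right\Vert _{L^{s}\left( \mu \right) }^{r}$. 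This ordering keeps the averages $F^{J}\left( x\right) $, rather than $\int_{I\times \mathbb{R}^{n}}f^{s}d\mu $, as the intermediate object, and so avoids the divergence you flag, with no auxiliary maximal function, no residual Carleson condition in the first variable, and no stopping times beyond those already inside Lemma \ref{theta bump lemma}. Also, the $\frac{1}{3}$-trick plays no role in this embedding: the failure of nestedness of $\mathcal{R}^{m,n}$ never intervenes because each application of the lemma takes place within a genuine grid ($\mathcal{D}^{m}$ for fixed $J$, then $\mathcal{D}^{n}$ for fixed $x$); both here and in \cite{TaYa} the $\frac{1}{3}$-trick enters only afterwards, when the dyadic forms are compared with the kernel of $I_{\alpha ,\beta }^{m,n}$ in the corollary.
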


\begin{proof}
As in \cite{TaYa}, we choose $p<r<q$. Then the definition of the $\theta $%
-bump characteristic, followed by H\"{o}lder's inequality with exponents $r$
and $r^{\prime }$, gives%
\begin{eqnarray*}
&&\sum_{R\in \mathcal{R}^{m,n}}K\left( R\right) \left( \int_{R}fd\sigma
\right) \left( \int_{R}gd\omega \right) \\
&=&\sum_{R\in \mathcal{R}^{m,n}}\left\{ K\left( R\right) \ \left\vert
R\right\vert _{\sigma ,\theta }^{\frac{1}{p^{\prime }}}\left\vert
R\right\vert _{\omega ,\theta }^{\frac{1}{q}}\right\} \left\vert
R\right\vert _{\sigma ,\theta }^{\frac{1}{p}}\left\vert R\right\vert
_{\omega ,\theta }^{\frac{1}{q^{\prime }}}\left( \frac{1}{\left\vert
R\right\vert _{\sigma ,\theta }}\int_{R}fd\sigma \right) \left( \frac{1}{%
\left\vert R\right\vert _{\omega ,\theta }}\int_{R}gd\omega \right) \\
&\leq &\mathbb{A}_{K,\theta }\left( \sigma ,\omega \right) \left\{
\sum_{R\in \mathcal{R}^{m,n}}\left\vert R\right\vert _{\sigma ,\theta }^{%
\frac{r}{p}}\left( \frac{1}{\left\vert R\right\vert _{\sigma ,\theta }}%
\int_{R}fd\sigma \right) ^{r}\right\} ^{\frac{1}{r}}\left\{ \sum_{R\in 
\mathcal{R}^{m,n}}\left\vert R\right\vert _{\omega ,\theta }^{\frac{%
r^{\prime }}{q^{\prime }}}\left( \frac{1}{\left\vert R\right\vert _{\omega
,\theta }}\int_{R}gd\omega \right) ^{r^{\prime }}\right\} ^{\frac{1}{%
r^{\prime }}},
\end{eqnarray*}%
and the theorem now follows from the following proposition.
\end{proof}

\begin{proposition}
Suppose that $1<s<r<\infty $, $\theta >1$, and that $\mu $ is a locally
finite absolutely continuous measure on $\mathbb{R}^{m}\times \mathbb{R}^{n}$%
. Then we have%
\begin{equation*}
\left\{ \sum_{R\in \mathcal{R}^{m,n}}\left\vert R\right\vert _{\mu ,\theta
}^{\frac{r}{s}}\left( \frac{1}{\left\vert R\right\vert _{\mu ,\theta }}%
\int_{R}fd\mu \right) ^{r}\right\} ^{\frac{1}{r}}\leq C_{s,r,\theta
}\left\Vert f\right\Vert _{L^{s}\left( \mu \right) }\ ,\ \ \ \ \ f\geq 0.
\end{equation*}
\end{proposition}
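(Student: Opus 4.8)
The plan is to derive this two-parameter Carleson embedding from the one-parameter Lemma \ref{theta bump lemma}, applying that lemma once in each coordinate and gluing the two applications together with Minkowski's integral inequality. One cannot simply repeat the Calder\'{o}n--Zygmund stopping-time proof of Lemma \ref{theta bump lemma}, because $\mathcal{R}^{m,n}=\mathcal{D}^{m}\times \mathcal{D}^{n}$ is not a grid --- there is no sensible notion of ``maximal rectangle'' --- and the cruder device of freezing one coordinate cube and applying the one-parameter lemma globally in the other is too lossy (it discards the integrability in the frozen variable). The saving observation is that the sum over $\mathcal{R}^{m,n}$, together with the $\theta $-bump functional, factors cleanly through the two one-dimensional grids.

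Write $d\mu (x,y)=u(x,y)\,dx\,dy$ and freeze a cube $J\in \mathcal{D}^{n}$. Put $\widetilde{u}_{J}(x):=\big( \int_{J}u(x,y)^{\theta }\,dy\big) ^{1/\theta }$, let $d\mu _{J}(x):=\widetilde{u}_{J}(x)\,dx$ on $\mathbb{R}^{m}$, and set $G_{J}(x):=\widetilde{u}_{J}(x)^{-1}\int_{J}f(x,y)u(x,y)\,dy$ (on the null set where $\widetilde{u}_{J}(x)=0$ the integral vanishes too, and we set $G_{J}(x):=0$). Tonelli's theorem gives the two identities
\[
\left\vert I\times J\right\vert _{\mu ,\theta }=\left\vert J\right\vert ^{1/\theta ^{\prime }}\left\vert I\right\vert _{\mu _{J},\theta },\qquad \int_{I\times J}f\,d\mu =\int_{I}G_{J}\,d\mu _{J}\qquad (I\in \mathcal{D}^{m}),
\]
so summing over $I$ and invoking Lemma \ref{theta bump lemma} on $\mathbb{R}^{m}$ (applied to $\mu _{J}$, $G_{J}$, with exponents $s<r$) bounds the $J$-frozen sum $\sum_{I\in \mathcal{D}^{m}}\left\vert I\times J\right\vert _{\mu ,\theta }^{r/s}\big( \left\vert I\times J\right\vert _{\mu ,\theta }^{-1}\int_{I\times J}f\,d\mu \big) ^{r}$ by $C\left\vert J\right\vert ^{(\frac{r}{s}-r)/\theta ^{\prime }}\left\Vert G_{J}\right\Vert _{L^{s}(\mu _{J})}^{r}$. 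Next I would rewrite $\left\Vert G_{J}\right\Vert _{L^{s}(\mu _{J})}^{s}=\int_{\mathbb{R}^{m}}\big( \int_{J}f\,u\,dy\big) ^{s}\widetilde{u}_{J}^{\,1-s}\,dx$ in terms of the one-dimensional measures $d\mu _{x}(y):=u(x,y)\,dy$ on $\mathbb{R}^{n}$: since $\widetilde{u}_{J}(x)=\left\vert J\right\vert ^{-1/\theta ^{\prime }}\left\vert J\right\vert _{\mu _{x},\theta }$ and $\int_{J}f\,u\,dy=\int_{J}f(x,\cdot )\,d\mu _{x}$, a direct computation gives $\left\Vert G_{J}\right\Vert _{L^{s}(\mu _{J})}^{s}=\left\vert J\right\vert ^{(s-1)/\theta ^{\prime }}\int_{\mathbb{R}^{m}}b_{J}(x)\,dx$ with
\[
b_{J}(x):=\left\vert J\right\vert _{\mu _{x},\theta }\left( \frac{1}{\left\vert J\right\vert _{\mu _{x},\theta }}\int_{J}f(x,\cdot )\,d\mu _{x}\right) ^{s}.
\]
The two powers of $\left\vert J\right\vert $ accumulated along the way cancel, since $(\frac{r}{s}-r)/\theta ^{\prime }=-\,r(s-1)/(s\theta ^{\prime })$ and $\left\Vert G_{J}\right\Vert _{L^{s}(\mu _{J})}^{r}$ carries the factor $\left\vert J\right\vert ^{r(s-1)/(s\theta ^{\prime })}$; hence the $J$-frozen sum is at most $C\big( \int_{\mathbb{R}^{m}}b_{J}\,dx\big) ^{r/s}$.

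It remains to sum over $J\in \mathcal{D}^{n}$. As $r/s\geq 1$, Minkowski's integral inequality applied to the $\ell ^{r/s}(\mathcal{D}^{n})$-valued function $x\mapsto (b_{J}(x))_{J}$ gives
\[
\sum_{R\in \mathcal{R}^{m,n}}\left\vert R\right\vert _{\mu ,\theta }^{r/s}\left( \frac{1}{\left\vert R\right\vert _{\mu ,\theta }}\int_{R}f\,d\mu \right) ^{r}\leq C\sum_{J\in \mathcal{D}^{n}}\left( \int_{\mathbb{R}^{m}}b_{J}\,dx\right) ^{r/s}\leq C\left( \int_{\mathbb{R}^{m}}\Big( \sum_{J\in \mathcal{D}^{n}}b_{J}(x)^{r/s}\Big) ^{s/r}\,dx\right) ^{r/s}.
\]
For each fixed $x$, $\sum_{J}b_{J}(x)^{r/s}=\sum_{J\in \mathcal{D}^{n}}\left\vert J\right\vert _{\mu _{x},\theta }^{r/s}\big( \left\vert J\right\vert _{\mu _{x},\theta }^{-1}\int_{J}f(x,\cdot )\,d\mu _{x}\big) ^{r}$, which by a second application of Lemma \ref{theta bump lemma} --- now on $\mathbb{R}^{n}$, with the measure $\mu _{x}$ and function $f(x,\cdot )$ --- is at most $C\left\Vert f(x,\cdot )\right\Vert _{L^{s}(\mu _{x})}^{r}=C\big( \int_{\mathbb{R}^{n}}f(x,y)^{s}u(x,y)\,dy\big) ^{r/s}$. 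Substituting this and integrating in $x$ yields the bound $C\big( \int_{\mathbb{R}^{m}}\int_{\mathbb{R}^{n}}f^{s}u\,dy\,dx\big) ^{r/s}=C\left\Vert f\right\Vert _{L^{s}(\mu )}^{r}$, and taking $r$-th roots completes the proof.

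The step I expect to require the most care is the reduction in the second paragraph: checking the two Tonelli identities, the relation $\widetilde{u}_{J}=\left\vert J\right\vert ^{-1/\theta ^{\prime }}\left\vert J\right\vert _{\mu _{x},\theta }$, the formula for $\left\Vert G_{J}\right\Vert _{L^{s}(\mu _{J})}^{s}$, and --- crucially --- that the scalar powers of $\left\vert J\right\vert $ really do cancel, so that the per-$J$ estimate has the clean form $C( \int b_{J}\,dx) ^{r/s}$; once this is in place, the two uses of Lemma \ref{theta bump lemma} and Minkowski's inequality assemble everything mechanically. (There is also a harmless regularity point: Lemma \ref{theta bump lemma} wants $\mu _{J}$ on $\mathbb{R}^{m}$ and $\mu _{x}$ on $\mathbb{R}^{n}$ to be locally $L^{\theta }$, which holds for all $J$ and a.e.\ $x$ as soon as one assumes, as one may, that $u\in L^{\theta }_{\mathrm{loc}}$.) Conceptually, the only idea is that the rectangle sum factors through the two coordinate grids, so the one-parameter lemma does all the analytic work in each variable while Minkowski's inequality --- valid precisely because $s<r$ --- bridges the two uses.
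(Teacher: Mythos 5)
Your proof is correct and follows essentially the same route as the paper's: freeze $J$, apply the one-parameter Lemma \ref{theta bump lemma} in the $x$-variable, glue with Minkowski's inequality in $\ell^{r/s}$, and apply the lemma again in the $y$-variable with the slice measures $\mu_x$. The only cosmetic difference is your normalization of the intermediate measure (density $\widetilde{u}_J$ rather than the paper's $J_{\mu,\theta}(x)=\left\vert J\right\vert_{\mu_x,\theta}=\left\vert J\right\vert^{1/\theta'}\widetilde{u}_J(x)$, which makes $\left\vert I\times J\right\vert_{\mu,\theta}=\left\vert I\right\vert_{J_{\mu,\theta},\theta}$ exactly), so you must track and cancel powers of $\left\vert J\right\vert$ by hand, which you do correctly.
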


\begin{proof}
We follow the outline of the iteration argument in H. Tanaka and K. Yabuta 
\cite{TaYa}, but adapted to $\theta $-bump functionals. Let $d\mu \left(
x,y\right) =u\left( x,y\right) dxdy$ and define%
\begin{eqnarray*}
&&u^{y}\left( x\right) \equiv u\left( x,y\right) \text{ and }u_{x}\left(
y\right) \equiv u\left( x,y\right) , \\
&&d\mu ^{y}\left( x\right) =u^{y}\left( x\right) dx\text{ and }d\mu
_{x}\left( y\right) =u_{x}\left( y\right) dy \\
&&\ \ \ \ \ \text{for a.e. }x\in \mathbb{R}^{m},\text{ a.e. }y\in \mathbb{R}%
^{n},
\end{eqnarray*}%
and note that%
\begin{eqnarray*}
&&\left\vert J\right\vert _{\mu _{x},\theta }\equiv \left\vert J\right\vert
^{\frac{1}{\theta ^{\prime }}}\left( \int_{J}u_{x}\left( y\right) ^{\theta
}dy\right) ^{\frac{1}{\theta }}\text{ and }\left\vert I\right\vert _{\mu
^{y},\theta }\equiv \left\vert I\right\vert ^{\frac{1}{\theta ^{\prime }}%
}\left( \int_{I}u^{y}\left( y\right) ^{\theta }dx\right) ^{\frac{1}{\theta }}
\\
&&\ \ \ \ \ \text{for a.e. }x\in \mathbb{R}^{m},\text{ a.e. }y\in \mathbb{R}%
^{n}.
\end{eqnarray*}

Now take $f\in L^{p}\left( \mu \right) $ and let%
\begin{equation*}
F^{J}\left( x\right) \equiv \frac{1}{\left\vert J\right\vert _{\mu
_{x},\theta }}\int_{J}f\left( x,y\right) u\left( x,y\right) dy\ \ \ \ \ 
\text{for a.e. }x\in \mathbb{R}^{m}.
\end{equation*}%
Note that%
\begin{eqnarray*}
\left\vert I\times J\right\vert _{\mu ,\theta } &=&\left\vert I\times
J\right\vert ^{\frac{1}{\theta ^{\prime }}}\left( \int_{I}\left\{
\int_{J}u\left( x,y\right) ^{\theta }dy\right\} dx\right) ^{\frac{1}{\theta }%
} \\
&=&\left\vert I\right\vert ^{\frac{1}{\theta ^{\prime }}}\left(
\int_{I}\left\{ \left\vert J\right\vert ^{\frac{1}{\theta ^{\prime }}}\left(
\int_{J}u\left( x,y\right) ^{\theta }dy\right) ^{\frac{1}{\theta }}\right\}
^{\theta }dx\right) ^{\frac{1}{\theta }}
\end{eqnarray*}%
where we can interpret the term in braces as%
\begin{equation*}
\left\vert J\right\vert ^{\frac{1}{\theta ^{\prime }}}\left(
\int_{J}u_{x}\left( y\right) ^{\theta }dy\right) ^{\frac{1}{\theta }%
}=\left\vert J\right\vert _{\mu _{x},\theta }
\end{equation*}%
so that we have%
\begin{equation*}
\left\vert I\times J\right\vert _{\mu ,\theta }=\left\vert I\right\vert ^{%
\frac{1}{\theta ^{\prime }}}\left( \int_{I}\left\vert J\right\vert _{\mu
_{x},\theta }^{\theta }dx\right) ^{\frac{1}{\theta }}\equiv \left\vert
I\right\vert ^{\frac{1}{\theta ^{\prime }}}\left( \int_{I}\left( J_{\mu
,\theta }\left( x\right) \right) ^{\theta }dx\right) ^{\frac{1}{\theta }%
}=\left\vert I\right\vert _{J_{\mu ,\theta },\theta }
\end{equation*}%
where we have defined the absolutely continuous measure $J_{\mu ,\theta }$
by $dJ_{\mu ,\theta }\left( x\right) =J_{\mu ,\theta }\left( x\right) dx$
and where its density function, which with a small abuse of notation we also
denote by $J_{\mu ,\theta }$, is given by 
\begin{equation*}
J_{\mu ,\theta }\left( x\right) \equiv \left\vert J\right\vert _{\mu
_{x},\theta },\ \ \ \ \ x\in \mathbb{R}^{m}.
\end{equation*}%
We then estimate%
\begin{eqnarray*}
&&\sum_{R\in \mathcal{R}^{m,n}}\left\vert R\right\vert _{\mu ,\theta }^{%
\frac{r}{s}}\left( \frac{1}{\left\vert R\right\vert _{\mu ,\theta }}%
\int_{R}f\left( x,y\right) u\left( x,y\right) dxdy\right) ^{r} \\
&=&\sum_{I\times J\in \mathcal{R}^{m,n}}\left\vert I\times J\right\vert
_{\mu ,\theta }^{\frac{r}{s}}\left( \frac{1}{\left\vert I\times J\right\vert
_{\mu ,\theta }}\int_{I\times J}f\left( x,y\right) u\left( x,y\right)
dxdy\right) ^{r} \\
&=&\sum_{J\in \mathcal{D}^{n}}\sum_{I\in \mathcal{D}^{m}}\left\vert
I\right\vert _{J_{\mu ,\theta },\theta }^{\frac{r}{s}}\left( \frac{1}{%
\left\vert I\right\vert _{J_{\mu ,\theta },\theta }}\int_{I}\left(
\int_{J}f\left( x,y\right) u\left( x,y\right) dy\ \frac{1}{J_{\mu ,\theta
}\left( x\right) }\right) J_{\mu ,\theta }\left( x\right) dx\right) ^{r} \\
&=&\sum_{J\in \mathcal{D}^{n}}\left\{ \sum_{I\in \mathcal{D}^{m}}\left\vert
I\right\vert _{J_{\mu ,\theta },\theta }^{\frac{r}{s}}\left( \frac{1}{%
\left\vert I\right\vert _{J_{\mu ,\theta },\theta }}\int_{I}F^{J}\left(
x\right) J_{\mu ,\theta }\left( x\right) dx\right) ^{r}\right\} \lesssim
\sum_{J\in \mathcal{D}^{n}}\left( \int_{\mathbb{R}^{m}}F^{J}\left( x\right)
^{s}J_{\mu ,\theta }\left( x\right) dx\right) ^{\frac{r}{s}},
\end{eqnarray*}%
by Lemma \ref{theta bump lemma}\ above applied with the locally finite
absolutely continuous measures $J_{\mu ,\theta }$ on $\mathbb{R}^{m}$, $J\in 
\mathcal{D}^{n}$. Now we continue to estimate the latter sum raised to the
power $\frac{s}{r}$ by Minkowski's inequality,%
\begin{equation*}
\left\{ \sum_{J\in \mathcal{D}^{n}}\left( \int_{\mathbb{R}^{m}}F^{J}\left(
x\right) ^{s}J_{\mu ,\theta }\left( x\right) dx\right) ^{\frac{r}{s}%
}\right\} ^{\frac{s}{r}}\leq \int_{\mathbb{R}^{m}}\left\{ \sum_{J\in 
\mathcal{D}^{n}}\left( F^{J}\left( x\right) ^{s}\right) ^{\frac{r}{s}%
}\right\} ^{\frac{s}{r}}J_{\mu ,\theta }\left( x\right) dx=\int_{\mathbb{R}%
^{m}}\left\{ \sum_{J\in \mathcal{D}^{n}}J_{\mu ,\theta }\left( x\right) ^{%
\frac{r}{s}}F^{J}\left( x\right) ^{r}\right\} ^{\frac{s}{r}}dx.
\end{equation*}%
Now apply Lemma \ref{theta bump lemma} above with the locally finite
absolutely continuous measures $\mu _{x}$ on $\mathbb{R}^{n}$ for a.e. $x\in 
\mathbb{R}^{m}$ to obtain%
\begin{eqnarray*}
\sum_{J\in \mathcal{D}^{n}}J_{\mu ,\theta }\left( x\right) ^{\frac{r}{s}%
}F^{J}\left( x\right) ^{r} &=&\sum_{J\in \mathcal{D}^{n}}J_{\mu ,\theta
}\left( x\right) ^{\frac{r}{s}}\left( \frac{1}{\left\vert J\right\vert _{\mu
_{x},\theta }}\int_{J}f_{x}\left( y\right) u_{x}\left( y\right) dy\right)
^{r} \\
&=&\sum_{J\in \mathcal{D}^{n}}\left\vert J\right\vert _{\mu _{x},\theta }^{%
\frac{r}{s}}\left( \frac{1}{\left\vert J\right\vert _{\mu _{x},\theta }}%
\int_{J}f_{x}\left( y\right) u_{x}\left( y\right) dy\right) ^{r} \\
&\lesssim &\left( \int_{\mathbb{R}^{n}}f_{x}\left( y\right) ^{s}u_{x}\left(
y\right) dy\right) ^{\frac{r}{s}}=\left( \int_{\mathbb{R}^{n}}f\left(
x,y\right) ^{s}u\left( x,y\right) dy\right) ^{\frac{r}{s}},
\end{eqnarray*}%
uniformly for a.e. $x\in \mathbb{R}^{m}$. Plugging this into the previous
display gives%
\begin{eqnarray*}
\left\{ \sum_{J\in \mathcal{D}^{n}}\left( \int_{\mathbb{R}^{m}}F^{J}\left(
x\right) ^{s}J_{\mu ,\theta }\left( x\right) dx\right) ^{\frac{r}{s}%
}\right\} ^{\frac{s}{r}} &\lesssim &\int_{\mathbb{R}^{m}}\left\{ \left(
\int_{\mathbb{R}^{n}}f\left( x,y\right) ^{s}u\left( x,y\right) dy\right) ^{%
\frac{r}{s}}\right\} ^{\frac{s}{r}}dx \\
&=&\int_{\mathbb{R}^{m}}\int_{\mathbb{R}^{n}}f\left( x,y\right) ^{s}u\left(
x,y\right) dydx=\left\Vert f\right\Vert _{L^{s}\left( \mu \right) }^{s}.
\end{eqnarray*}%
Altogether then we have%
\begin{eqnarray*}
&&\sum_{R\in \mathcal{R}^{m,n}}\left\vert R\right\vert _{\mu ,\theta }^{%
\frac{r}{s}}\left( \frac{1}{\left\vert R\right\vert _{\mu ,\theta }}%
\int_{R}f\left( x,y\right) u\left( x,y\right) dxdy\right) ^{r} \\
&\lesssim &\sum_{J\in \mathcal{D}^{n}}\left( \int_{\mathbb{R}%
^{m}}F^{J}\left( x\right) ^{s}J_{\mu ,\theta }\left( x\right) dx\right) ^{%
\frac{r}{s}}\lesssim \left\Vert f\right\Vert _{L^{s}\left( \mu \right)
}^{r}\ .
\end{eqnarray*}
\end{proof}

\subsubsection{Product fractional integrals}

The Tanaka-Yabuta theorem \cite[Theorem 1.1]{TaYa}, as well as the variant
in Theorem \ref{variation} above, uses an arbitrary nonnegative function $%
K\left( R\right) $ defined on dyadic rectangles $R\in \mathcal{R}^{m,n}$. If
for $0<\frac{\alpha }{m},\frac{\beta }{n}<1$, we define 
\begin{equation}
K_{\alpha ,\beta }^{m,n}\left( R\right) =K\left( I\times J\right) \equiv
\left\vert I\right\vert ^{\frac{\alpha }{m}-1}\left\vert J\right\vert ^{%
\frac{\beta }{n}-1},  \label{def K}
\end{equation}%
for $R=I\times J\in \mathcal{R}^{m,n}$, then in the special case $%
K=K_{\alpha ,\beta }^{m,n}$ we have the following pointwise estimate,%
\begin{eqnarray*}
\sum_{R\in \mathcal{R}^{m,n}}K_{\alpha ,\beta }^{m,n}\left( R\right) \mathbf{%
1}_{R}\left( x,y\right) \mathbf{1}_{R}\left( u,v\right) &=&\sum_{I\times
J\in \mathcal{R}^{m,n}}\left\{ K\left( I\times J\right) :x,u\in I\text{ and }%
y,v\in J\right\} \\
&=&\sum_{I\times J\in \mathcal{R}^{m,n}}\left\{ \left\vert I\right\vert ^{%
\frac{\alpha }{m}-1}\left\vert J\right\vert ^{\frac{\beta }{n}-1}:x,u\in I%
\text{ and }y,v\in J\right\} \\
&=&\sum_{I\in \mathcal{D}^{m}}\left\{ \left\vert I\right\vert ^{\frac{\alpha 
}{m}-1}:x,u\in I\right\} \ \times \ \sum_{J\in \mathcal{D}^{n}}\left\{
\left\vert J\right\vert ^{\frac{\beta }{n}-1}:y,v\in J\right\} \\
&\approx &d\left( x,u\right) ^{\frac{\alpha }{m}-1}d\left( y,v\right) ^{%
\frac{\beta }{n}-1}\lesssim \left\vert x-u\right\vert ^{\frac{\alpha }{m}%
-1}\left\vert y-v\right\vert ^{\frac{\beta }{n}-1},
\end{eqnarray*}%
where $d_{\limfunc{dy}}\left( x,u\right) $ denotes the \emph{dyadic}
distance between $x$ and $u$ in $\mathbb{R}^{m}$, and $d_{\limfunc{dy}%
}\left( y,v\right) $ denotes the \emph{dyadic} distance between $y$ and $v$
in $\mathbb{R}^{n}$. Here the dyadic distance between two points $p$ and $q$
in $\mathbb{R}^{k}$ is defined to be the side length of the smallest dyadic
cube containing $p$ and $q$. Note that the dyadic distance is at least $%
\frac{1}{\sqrt{k}}$ times the Euclidean distance since any dyadic cube $Q$
containing $x$ and $y$ must satisfy 
\begin{equation*}
\ell \left( Q\right) \geq \max_{1\leq i\leq k}\left\vert
x_{i}-y_{i}\right\vert \geq \sqrt{\frac{1}{k}\sum_{i=1}^{k}\left\vert
x_{i}-y_{i}\right\vert ^{2}}=\frac{1}{\sqrt{k}}\left\vert x-y\right\vert .
\end{equation*}%
So in order to apply the next theorem to the product fractional integral
operator with kernel $\left\vert x-u\right\vert ^{\frac{\alpha }{m}%
-1}\left\vert y-v\right\vert ^{\frac{\beta }{n}-1}$ it suffices to appeal to
Stromberg's well-known $\frac{1}{3}$-trick for the dyadic grids $\left\{ 
\mathcal{D}_{i}^{m}\right\} _{i=1}^{3^{m}}$ and $\left\{ \mathcal{D}%
_{j}^{n}\right\} _{j=1}^{3^{n}}$, to obtain%
\begin{equation}
\sum_{i=1}^{3^{m}}\sum_{j=1}^{3^{n}}\left[ \sum_{R=I\times J\in \mathcal{D}%
_{i}^{m}\times \mathcal{D}_{j}^{n}}K\left( R\right) \mathbf{1}_{R}\left(
x,y\right) \mathbf{1}_{R}\left( u,v\right) \right] \approx \left\vert
x-u\right\vert ^{\frac{\alpha }{m}-1}\left\vert y-v\right\vert ^{\frac{\beta 
}{n}-1}.  \label{expectation}
\end{equation}%
Variants of the following lemma can be found many times over in the
literature, too numerous to mention here. Let $\mathcal{P}^{N}$ denote the
collection of all cubes in $\mathbb{R}^{N}$ with sides parallel to the
coordinate axes.

\begin{lemma}
For $K\left( R\right) $ defined as in (\ref{def K}) we have (\ref%
{expectation}).
\end{lemma}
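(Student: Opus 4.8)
The plan is to establish the two-sided estimate
\begin{equation*}
\sum_{i=1}^{3^{m}}\sum_{j=1}^{3^{n}}\left[ \sum_{R=I\times J\in \mathcal{D}_{i}^{m}\times \mathcal{D}_{j}^{n}}K\left( R\right) \mathbf{1}_{R}\left( x,y\right) \mathbf{1}_{R}\left( u,v\right) \right] \approx \left\vert x-u\right\vert ^{\frac{\alpha }{m}-1}\left\vert y-v\right\vert ^{\frac{\beta }{n}-1}
\end{equation*}
by reducing it to a product of two one-parameter statements, one in $\mathbb{R}^{m}$ for the $x,u$ variables and one in $\mathbb{R}^{n}$ for the $y,v$ variables, since the kernel $K(I\times J)=\left\vert I\right\vert ^{\frac{\alpha}{m}-1}\left\vert J\right\vert ^{\frac{\beta}{n}-1}$ and the indicator $\mathbf{1}_{R}=\mathbf{1}_{I}\otimes\mathbf{1}_{J}$ both split as tensor products. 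The computation already displayed in the excerpt shows the dyadic-grid sum factors as $\left(\sum_{I\in\mathcal{D}^{m}}\{\left\vert I\right\vert ^{\frac{\alpha}{m}-1}:x,u\in I\}\right)\left(\sum_{J\in\mathcal{D}^{n}}\{\left\vert J\right\vert ^{\frac{\beta}{n}-1}:y,v\in J\}\right)$, so after summing over the $3^{m}\cdot 3^{n}$ pairs of shifted grids the whole expression is the product of $\sum_{i=1}^{3^{m}}\sum_{I\in\mathcal{D}_{i}^{m}}\{\left\vert I\right\vert ^{\frac{\alpha}{m}-1}:x,u\in I\}$ with the analogous $n$-dimensional factor.

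The core one-parameter fact I would isolate as a sublemma is: for any two points $p,q\in\mathbb{R}^{k}$ and any $0<\gamma<1$,
\begin{equation*}
\sum_{Q\in\mathcal{D}^{k}:\ p,q\in Q}\left\vert Q\right\vert ^{\gamma -1}\approx d_{\mathrm{dy}}(p,q)^{k(\gamma-1)},
\end{equation*}
with comparability constants depending only on $k$ and $\gamma$. This is immediate: the dyadic cubes containing both $p$ and $q$ are linearly ordered by inclusion, the smallest has side length $d_{\mathrm{dy}}(p,q)$ and hence volume $d_{\mathrm{dy}}(p,q)^{k}$, and the volumes of successive ancestors increase by a factor $2^{k}$, so the sum is a convergent geometric series $\sum_{\ell\ge 0}\left(2^{k\ell}d_{\mathrm{dy}}(p,q)^{k}\right)^{\gamma-1}$ whose value is comparable to its largest term $d_{\mathrm{dy}}(p,q)^{k(\gamma-1)}$ precisely because $\gamma-1<0$. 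Applying this with $k=m$, $\gamma=\frac{\alpha}{m}$ and with $k=n$, $\gamma=\frac{\beta}{n}$ handles each factor of the fixed grid $\mathcal{D}^{m}\times\mathcal{D}^{n}$; keeping only the $i=j=1$ term already gives the lower bound in \eqref{expectation} up to the conversion $d_{\mathrm{dy}}\ge c_{k}\left\vert\cdot\right\vert$ noted in the excerpt, while the upper bound follows by summing the $3^{m}\cdot 3^{n}$ nonnegative terms and using $d_{\mathrm{dy}}\gtrsim\left\vert\cdot\right\vert$ again in the form $d_{\mathrm{dy}}(p,q)^{k(\gamma-1)}\le\left\vert p-q\right\vert^{k(\gamma-1)}$ (since $\gamma-1<0$).

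For the matching lower bound I would invoke the Str\"omberg $\tfrac13$-trick in each coordinate space separately: for every pair $p,q\in\mathbb{R}^{k}$ there is at least one of the $3^{k}$ shifted dyadic grids $\mathcal{D}_{i}^{k}$ containing a cube $Q\ni p,q$ with $\ell(Q)\le c_{k}\left\vert p-q\right\vert$, hence $\sum_{i=1}^{3^{k}}\sum_{Q\in\mathcal{D}_{i}^{k}:\ p,q\in Q}\left\vert Q\right\vert ^{\gamma-1}\gtrsim\left\vert p-q\right\vert^{k(\gamma-1)}$. Multiplying the $\mathbb{R}^{m}$ and $\mathbb{R}^{n}$ estimates and expanding the product of the two sums over $i$ and $j$ reproduces exactly the double sum $\sum_{i=1}^{3^{m}}\sum_{j=1}^{3^{n}}$ in \eqref{expectation}, because each summand factors over the two variable groups. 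The only mild technical point — the ``main obstacle,'' though it is routine — is bookkeeping the equivalence of the three notions of distance (Euclidean, dyadic in a fixed grid, and dyadic minimized over the shifted grids) with constants that do not interfere when raised to the negative power $\gamma-1$; once the sublemma above is in place the rest is just tensoring and summing finitely many nonnegative terms.
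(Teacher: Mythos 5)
Your proof is correct and follows essentially the same route as the paper's: the upper bound in (\ref{expectation}) comes from the geometric series over the chain of dyadic ancestors of the minimal cube containing the two points together with $d_{\mathrm{dy}}\gtrsim \left\vert \cdot \right\vert$ and the negativity of the exponents, and the lower bound comes from the Str\"omberg $\frac{1}{3}$-trick, which the paper implements by writing the Euclidean kernel as a sum over centered rectangles $R_{a,b}(x,y)$ at dyadic scales and sandwiching each one inside a shifted-grid rectangle of comparable dimensions, while you simply select, for each pair of points and each factor space, a single shifted-grid cube of side length $\lesssim$ the Euclidean distance --- a cosmetic difference only, since both rest on the same tensorization and the same sandwiching property. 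One slip to fix: your clause that keeping only the $i=j=1$ term ``already gives the lower bound up to the conversion $d_{\mathrm{dy}}\geq c_{k}\left\vert \cdot \right\vert$'' is backwards, because with negative exponents $d_{\mathrm{dy}}\gtrsim\left\vert \cdot\right\vert$ only bounds a single-grid sum from \emph{above} (and a single grid genuinely cannot yield the lower bound, e.g.\ for points straddling a dyadic boundary); your subsequent $\frac{1}{3}$-trick paragraph is what correctly supplies the lower bound, so nothing essential is missing.
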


\begin{proof}
For convenience we recall a variation on the $\frac{1}{3}$-trick given in
Lemma 2.5 of \cite{HyLaPe}. For a given dyadic grid $\mathcal{D\subset P}%
^{N} $ with side lengths in $\left\{ \frac{2^{m}}{3}\right\} _{m\in \mathbb{Z%
}}$, partition the collection of tripled cubes $\left\{ 3I\right\} _{I\in 
\mathcal{D}}$ into $3^{N}$ subcollections $\left\{ S_{u}\right\}
_{u=1}^{3^{N}}$, with the property that for each subcollection $S_{u}$ there
exists a dyadic grid $\mathcal{D}_{u}$ with side lengths in $\left\{
2^{m}\right\} _{m\in \mathbb{Z}}$, such that $S_{u}\subset \mathcal{D}_{u}$.
With these grids $\left\{ \mathcal{D}_{u}\right\} _{u=1}^{3^{N}}$ fixed, we
have the following sandwiching property. For each cube $P\in \mathcal{P}^{N}$
and each integer $j\in \mathbb{N}$, there is a choice of $u=u\left(
P,j\right) $ with $1\leq u\leq 3^{n}$ and a cube $I=I_{u\left( P,j\right)
}\in \mathcal{D}_{u}$ such that 
\begin{eqnarray}
\ell \left( I\right) &\leq &18\ \ell \left( P\right) ,  \label{sandwich} \\
3P &\subset &I,  \notag \\
2^{j}P &\subset &\pi _{\mathcal{D}_{u}}^{\left( j\right) }I,  \notag
\end{eqnarray}%
where $\pi _{\mathcal{D}_{u}}^{\left( j\right) }I$ denotes the $j^{th}$
grandparent of $I$ in the grid $\mathcal{D}_{u}$.

Now fix $\left( x,y\right) \in \mathbb{R}^{m}\times \mathbb{R}^{n}$. For $%
x\in \mathbb{R}^{N}$, let $P\left( x,\ell \right) $ denote the cube centered
at $x$ with side length $\ell \in \left\{ 2^{k}\right\} _{k\in \mathbb{Z}}$.
Then with $R_{a,b}\left( x,y\right) \equiv P\left( x,2^{a}\right) \times
Q\left( y,2^{b}\right) $ for $a,b\in \mathbb{Z}$, we note that the right
hand side of (\ref{expectation}) is equivalent to%
\begin{equation*}
\sum_{a,b\in \mathbb{Z}}K\left( R_{a,b}\left( x,y\right) \right) \mathbf{1}%
_{R_{a,b}\left( x,y\right) }\left( x,y\right) \mathbf{1}_{R_{a,b}\left(
x,y\right) }\left( u,v\right) ,\ \ \ \ \ \left( u,v\right) \in \mathbb{R}%
^{m}\times \mathbb{R}^{n}.
\end{equation*}%
The first two lines in (\ref{sandwich}) now prove (\ref{expectation}), since
for each rectangle $R_{a,b}\left( x,y\right) \equiv P\left( x,2^{a}\right)
\times Q\left( y,2^{b}\right) $ there is $I\times J\in
\bigcup_{i=1}^{3^{m}}\bigcup_{j=1}^{3^{n}}\left( \mathcal{D}_{i}^{m}\times 
\mathcal{D}_{j}^{n}\right) $ such that 
\begin{equation*}
3R_{a,b}\left( x,y\right) \subset I\times J\subset 18R_{a,b}\left(
x,y\right) ,
\end{equation*}%
and moreover, by the definition of $K$ in (\ref{def K}), we then have $%
K\left( R_{a,b}\left( x,y\right) \right) \approx K\left( I\times J\right) $.
We do not need the third line in (\ref{expectation}) here.
\end{proof}

\begin{corollary}
\label{ext}Let $1<p<q<\infty $, $0<\alpha <m$,$\ 0<\beta <n$, $\theta >1$,
and let $v$ and $w$ be absolutely continuous weights on $\mathbb{R}%
^{m}\times \mathbb{R}^{n}$. Then the product fractional integral $I_{\alpha
,\beta }^{m,n}$ is bounded from $L^{p}\left( v^{p}\right) $ to $L^{q}\left(
w^{q}\right) $ if the $\theta $-bump rectangle characteristic $\mathbb{A}%
_{p,q;\theta }^{\left( \alpha ,\beta \right) ,\left( m,n\right) }\left(
v,w\right) $ is finite, where%
\begin{equation*}
\mathbb{A}_{p,q;\theta }^{\left( \alpha ,\beta \right) ,\left( m,n\right)
}\left( v,w\right) \equiv \sup_{I\times J\in \mathcal{R}^{m,n}}\left\vert
I\right\vert ^{\frac{\alpha }{m}-\frac{1}{p}+\frac{1}{q}}\left\vert
J\right\vert ^{\frac{\beta }{n}-\frac{1}{p}+\frac{1}{q}}\left( \frac{1}{%
\left\vert I\times J\right\vert }\int \int_{I\times J}v^{-p^{\prime }\theta
}\right) ^{\frac{1}{p^{\prime }\theta }}\ \left( \frac{1}{\left\vert I\times
J\right\vert }\int \int_{I\times J}w^{q\theta }\right) ^{\frac{1}{q\theta }%
}\ .
\end{equation*}
\end{corollary}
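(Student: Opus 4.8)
The plan is to derive Corollary~\ref{ext} from Theorem~\ref{variation} by specializing the abstract kernel $K$ to $K_{\alpha ,\beta }^{m,n}$ of (\ref{def K}), after a routine dualization that converts the norm inequality for $I_{\alpha ,\beta }^{m,n}$ into a positive bilinear form over rectangles. Put $d\sigma =v^{-p^{\prime }}dx$ and $d\omega =w^{q}dx$. Boundedness of $I_{\alpha ,\beta }^{m,n}:L^{p}(v^{p})\rightarrow L^{q}(w^{q})$ says $\left\Vert w\,I_{\alpha ,\beta }^{m,n}h\right\Vert _{L^{q}(dx)}\leq C\left\Vert vh\right\Vert _{L^{p}(dx)}$ for all $h$, and since the kernel of $I_{\alpha ,\beta }^{m,n}$ is nonnegative it suffices to take $h\geq 0$; writing $h=f\,\frac{d\sigma }{dx}$ and using $p(1-p^{\prime })=-p^{\prime }$ this becomes $\left\Vert I_{\alpha ,\beta }^{m,n}(f\,d\sigma )\right\Vert _{L^{q}(\omega )}\leq C\left\Vert f\right\Vert _{L^{p}(\sigma )}$ for $f\geq 0$, which by $L^{q}(\omega )$--$L^{q^{\prime }}(\omega )$ duality and the definition of $I_{\alpha ,\beta }^{m,n}$ (Tonelli applies since everything is nonnegative) is equivalent to
\[
\int \int \left\vert x-u\right\vert ^{\frac{\alpha }{m}-1}\left\vert y-v\right\vert ^{\frac{\beta }{n}-1}f(u,v)\,g(x,y)\,d\sigma (u,v)\,d\omega (x,y)\leq C\,\left\Vert f\right\Vert _{L^{p}(\sigma )}\left\Vert g\right\Vert _{L^{q^{\prime }}(\omega )}
\]
for all $f,g\geq 0$, the integrals being over $\mathbb{R}^{m}\times \mathbb{R}^{n}$.

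Next I would discretize. By the lemma immediately preceding the corollary, i.e.\ the estimate (\ref{expectation}) coming from Str\"{o}mberg's $\frac{1}{3}$-trick, the product kernel $\left\vert x-u\right\vert ^{\frac{\alpha }{m}-1}\left\vert y-v\right\vert ^{\frac{\beta }{n}-1}$ is dominated pointwise by a bounded multiple of $\sum_{i=1}^{3^{m}}\sum_{j=1}^{3^{n}}\sum_{R\in \mathcal{D}_{i}^{m}\times \mathcal{D}_{j}^{n}}K_{\alpha ,\beta }^{m,n}(R)\,\mathbf{1}_{R}(x,y)\,\mathbf{1}_{R}(u,v)$. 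Substituting this into the bilinear form and using Tonelli again bounds its left-hand side by a constant times
\[
\sum_{i=1}^{3^{m}}\sum_{j=1}^{3^{n}}\ \sum_{R\in \mathcal{D}_{i}^{m}\times \mathcal{D}_{j}^{n}}K_{\alpha ,\beta }^{m,n}(R)\left( \int_{R}f\,d\sigma \right) \left( \int_{R}g\,d\omega \right) .
\]
Theorem~\ref{variation}, applied with $K=K_{\alpha ,\beta }^{m,n}$ to each of the $3^{m+n}$ partial grids $\mathcal{R}^{m,n}=\mathcal{D}_{i}^{m}\times \mathcal{D}_{j}^{n}$ (it holds uniformly over all partial grids), bounds each inner double sum by $C\,\mathbb{A}_{K_{\alpha ,\beta }^{m,n},\theta }(\sigma ,\omega )\left\Vert f\right\Vert _{L^{p}(\sigma )}\left\Vert g\right\Vert _{L^{q^{\prime }}(\omega )}$, and summing the finitely many grids preserves this.

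It then remains to identify the two characteristics. Writing out $\left\vert R\right\vert _{\omega ,\theta }=|R|^{1/\theta ^{\prime }}\left( \int_{R}w^{q\theta }\right) ^{1/\theta }$ and $\left\vert R\right\vert _{\sigma ,\theta }=|R|^{1/\theta ^{\prime }}\left( \int_{R}v^{-p^{\prime }\theta }\right) ^{1/\theta }$, for a rectangle $R=I\times J$ one computes
\[
K_{\alpha ,\beta }^{m,n}(R)\,\left\vert R\right\vert _{\omega ,\theta }^{1/q}\,\left\vert R\right\vert _{\sigma ,\theta }^{1/p^{\prime }}=\left\vert I\right\vert ^{\frac{\alpha }{m}-\frac{1}{p}+\frac{1}{q}}\left\vert J\right\vert ^{\frac{\beta }{n}-\frac{1}{p}+\frac{1}{q}}\left( \frac{1}{|R|}\int_{R}v^{-p^{\prime }\theta }\right) ^{\frac{1}{p^{\prime }\theta }}\left( \frac{1}{|R|}\int_{R}w^{q\theta }\right) ^{\frac{1}{q\theta }},
\]
where one uses $\frac{1}{p}+\frac{1}{p^{\prime }}=1$ and $\frac{1}{\theta }+\frac{1}{\theta ^{\prime }}=1$ to see that the powers of $|I|$ and $|J|$ collapse as shown. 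Taking suprema, $\mathbb{A}_{K_{\alpha ,\beta }^{m,n},\theta }$ over any of the shifted grids is at most $\mathbb{A}_{p,q;\theta }^{(\alpha ,\beta ),(m,n)}(v,w)$, and chaining the preceding steps yields the asserted boundedness whenever $\mathbb{A}_{p,q;\theta }^{(\alpha ,\beta ),(m,n)}(v,w)<\infty $.

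I expect no genuinely hard step: Theorem~\ref{variation} already carries the Tanaka-Yabuta iteration that does the real work, and the corollary is in essence a translation between the abstract ``$\theta $-bump rectangle testing'' formulation and the classical two-weight bump condition. The only points needing care are that the $\frac{1}{3}$-trick reduction costs merely the harmless factor $3^{m+n}$ and the comparability constant in (\ref{expectation}), and the exponent arithmetic of the last step; both are bookkeeping rather than substance.
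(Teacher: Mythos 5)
Your proposal is correct and follows essentially the same route the paper intends: dualize the norm inequality to the positive bilinear form with $d\sigma =v^{-p^{\prime }}dx$, $d\omega =w^{q}dx$, dominate the continuous product kernel by the dyadic sums over the $3^{m+n}$ shifted grids via the lemma giving (\ref{expectation}), apply Theorem \ref{variation} with $K=K_{\alpha ,\beta }^{m,n}$ uniformly over those partial grids, and check that $K(R)\left\vert R\right\vert _{\omega ,\theta }^{1/q}\left\vert R\right\vert _{\sigma ,\theta }^{1/p^{\prime }}$ collapses to the characteristic $\mathbb{A}_{p,q;\theta }^{(\alpha ,\beta ),(m,n)}(v,w)$. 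Your exponent bookkeeping is right, so there is nothing to add.
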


\begin{remark}
The above proof of the Corollary, when restricted to the $1$-parameter case,
gives a short and elegant proof of Theorem 1(A) in \cite{SaWh} in the
special case $p<q$.
\end{remark}

\subsection{Reverse doubling weights for bilinear embeddings}

Here is a slight improvement of the theorem of Tanaka and Yabuta \cite{TaYa}%
, valid for the product fractional integral kernel, as well as more general
kernels $K$ satisfying property (\ref{satisfy both}) below regarding
expectations taken over partial grids $\mathcal{R}^{m,n}=\mathcal{D}%
^{m}\times \mathcal{D}^{n}$. Recall that $\mu $ is a product reverse
doubling weight on $\mathbb{R}^{m}\times \mathbb{R}^{n}$ if (\ref{product
rev doub}) holds.

\begin{theorem}
\label{theorem rev doub}Suppose $1<p<q<\infty $. Let $\sigma $ and $\omega $
be product reverse doubling weights on $\mathbb{R}^{m}\times \mathbb{R}^{n}$%
, and let $K=K_{\alpha ,\beta }^{m,n}:\mathcal{R}^{m,n}\rightarrow \left[
0,\infty \right) $ be as in (\ref{def K}), or more generally satisfy the
expectation inequality (\ref{satisfy both}) below. Then the norm $\mathbb{N}%
_{K}\left( \sigma ,\omega \right) $ of the positive bilinear inequality,%
\begin{equation*}
\sum_{R\in \mathcal{R}^{m,n}}K\left( R\right) \left( \int_{R}fd\sigma
\right) \left( \int_{R}gd\omega \right) \leq \mathbb{N}_{K}\left( \sigma
,\omega \right) \ \left\Vert f\right\Vert _{L^{p}\left( \sigma \right)
}\left\Vert g\right\Vert _{L^{q^{\prime }}\left( \omega \right) }\ ,\ \ \ \
\ f,g\geq 0,
\end{equation*}%
is finite for all partial grids $\mathcal{R}^{m,n}=\mathcal{D}^{m}\times 
\mathcal{D}^{n}$ \emph{if and only if}%
\begin{equation*}
\mathbb{A}_{K}\left( \sigma ,\omega \right) \equiv \sup_{R\in \mathcal{P}%
^{m}\times \mathcal{P}^{n}}K\left( R\right) \ \left\vert R\right\vert
_{\omega }^{\frac{1}{q}}\ \left\vert R\right\vert _{\sigma }^{\frac{1}{%
p^{\prime }}}<\infty ,\ \ \ \ \ \text{for all rectangles }R\in \mathcal{P}%
^{n}\times \mathcal{P}^{m}\ .
\end{equation*}
\end{theorem}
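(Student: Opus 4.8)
The plan is to run the Tanaka--Yabuta iteration exactly as in the proof of the proposition above, but with two substitutions: the automatic Carleson estimate \eqref{automatic}, which was free there because of the $\theta $-bump, is replaced by a Carleson estimate for \emph{good} cubes powered instead by reverse doubling, and the Str\"{o}mberg $\frac{1}{3}$-trick is replaced by NTV random good/bad grids. Necessity is routine: testing the bilinear inequality with $f=g=\mathbf{1}_{R_{0}}$ for a dyadic rectangle $R_{0}$ gives $K(R_{0})\,|R_{0}|_{\sigma }^{1/p^{\prime }}|R_{0}|_{\omega }^{1/q}\leq \mathbb{N}_{K}(\sigma ,\omega )$, and the expectation inequality \eqref{satisfy both}, which relates $K$ on the partial grids to $K$ on all of $\mathcal{P}^{m}\times \mathcal{P}^{n}$, promotes the dyadic supremum to $\mathbb{A}_{K}(\sigma ,\omega )\lesssim \mathbb{N}_{K}(\sigma ,\omega )$.

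For sufficiency, assume $\mathbb{A}_{K}(\sigma ,\omega )<\infty $. After pairing against $f\,d\sigma $ and $g\,d\omega $, the inequality \eqref{satisfy both} dominates the bilinear form over any fixed partial grid by the expectation, over a pair of random dyadic grids $\mathcal{D}\subset \mathbb{R}^{m}$ and $\mathcal{E}\subset \mathbb{R}^{n}$, of the same bilinear form but restricted to rectangles $R=I\times J$ with $I$ good in $\mathcal{D}$ and $J$ good in $\mathcal{E}$; so it suffices to bound this good-rectangle bilinear form by $C\,\mathbb{A}_{K}(\sigma ,\omega )\,\Vert f\Vert _{L^{p}(\sigma )}\Vert g\Vert _{L^{q^{\prime }}(\omega )}$ with $C$ independent of the grids, and then take expectations. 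Choosing $p<r<q$ and applying H\"{o}lder's inequality with exponents $r,r^{\prime }$ exactly as in the proof of Theorem \ref{variation}, but with the plain functionals $|R|_{\sigma }$, $|R|_{\omega }$ in place of the $\theta $-bump ones, reduces this to the two one-weight bilinear Carleson embeddings
\[
\Big\{ \sum_{R\ \text{good}}|R|_{\mu }^{r/s}\Big( \frac{1}{|R|_{\mu }}\int_{R}h\,d\mu \Big) ^{r}\Big\} ^{1/r}\leq C\,\Vert h\Vert _{L^{s}(\mu )},
\]
taken with $(\mu ,s)=(\sigma ,p)$ and $(\mu ,s)=(\omega ,q^{\prime })$; in both cases $1<s<r<\infty $ and $\mu $ is product reverse doubling.

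The key new ingredient, replacing \eqref{automatic}, is the following Carleson estimate for good cubes: if $\nu $ is reverse doubling on $\mathbb{R}^{N}$ and $\rho >1$, then $\sum \{|Q|_{\nu }^{\rho }:Q\in \mathcal{D}^{N},\ Q\subseteq P,\ Q\ \text{good}\}\leq C_{\rho }\,|P|_{\nu }^{\rho }$ for every dyadic cube $P$. Indeed, an NTV-good dyadic cube $Q\subseteq P$ of relative level $k$ lies at distance $\gtrsim \ell (Q)^{\gamma }\ell (P)^{1-\gamma }$ from $\partial P$, hence contains a concentric cube $\widetilde{Q}\subseteq P$ with $\ell (\widetilde{Q})\approx 2^{-k\gamma }\ell (P)$; since $Q$ is then a concentric dilate of $\widetilde{Q}$ by a factor comparable to $2^{-k(1-\gamma )}$, applying reverse doubling to the pair $(\widetilde{Q},Q)$ followed by monotonicity gives $|Q|_{\nu }\leq C\,2^{-\varepsilon k(1-\gamma )}|P|_{\nu }$. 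Summing $|Q|_{\nu }^{\rho }\leq (\max |Q|_{\nu })^{\rho -1}|Q|_{\nu }$ over the cubes of each fixed relative level, where the $\nu $-measures add up to at most $|P|_{\nu }$, and then over levels, yields a convergent geometric series. Granting this, the proof of Lemma \ref{theta bump lemma} goes through verbatim with $\theta =1$ and all sums restricted to good cubes, giving the one-parameter good-cube embedding $\{ \sum_{Q\in \mathcal{D}^{N}\ \text{good}}|Q|_{\nu }^{r/s}(\frac{1}{|Q|_{\nu }}\int_{Q}h\,d\nu )^{r}\} ^{1/r}\leq C\Vert h\Vert _{L^{s}(\nu )}$ whenever $1<s<r<\infty $ and $\nu $ is reverse doubling.

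With this lemma each of the two bilinear Carleson embeddings is handled by the iteration in the proof of the proposition above. Writing $J_{\mu }(x)=\int_{J}u(x,y)\,dy$ one has $|I\times J|_{\mu }=|I|_{J_{\mu }}$, and \eqref{product rev doub} — applied with no shrinking of the $J$-cube, which is legitimate since $\mu $ is absolutely continuous — shows that $J_{\mu }$ is reverse doubling on $\mathbb{R}^{m}$ with constants independent of $J$; likewise, applied with no shrinking of the $I$-cube together with the Lebesgue differentiation theorem, it shows that the slices $\mu _{x}$ are reverse doubling on $\mathbb{R}^{n}$ for a.e.\ $x$ with uniform constants. So one applies the one-parameter good-cube embedding in the $x$-variable with the measures $J_{\mu }$, then Minkowski's inequality in $J$, then the one-parameter good-cube embedding in the $y$-variable with the measures $\mu _{x}$, reaching $C\Vert h\Vert _{L^{s}(\mu )}$ exactly as in the $\theta $-bump case. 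Feeding the two embeddings back through the H\"{o}lder step bounds the good-rectangle bilinear form uniformly in the grids, and taking expectations completes the proof. The main obstacle is the good-cube Carleson estimate: the point is that NTV goodness forces a cube to be essentially concentric with each of its ancestors, which is precisely what allows the reverse doubling hypothesis — a statement only about concentric dilates — to substitute for the bump that made \eqref{automatic} automatic; a secondary technical point is the uniform passage of product reverse doubling to the integrated measures $J_{\mu }$ and, via an almost-everywhere argument, to the slices $\mu _{x}$.
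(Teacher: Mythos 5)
Your proposal is correct and follows essentially the same route as the paper: random NTV good/bad grids in place of the Str\"{o}mberg $\frac{1}{3}$-trick, the H\"{o}lder step with $p<r<q$, the Tanaka--Yabuta iteration in the two variables, and the key good-cube Carleson condition (\ref{Car rev doub}) obtained from reverse doubling via the goodness distance-to-boundary estimate and concentric dilates, then summed as a geometric series. The only differences are minor: you spell out the (routine) passage of product reverse doubling to the integrated measures $J_{\mu }$ and, via Lebesgue differentiation, to the slices $\mu _{x}$, a point the paper leaves to the reader, while you should note explicitly that the finitely many relative levels $k\leq \mathbf{r}$, where goodness gives no boundary separation, are handled by the trivial bound $\left\vert Q\right\vert _{\nu }\leq \left\vert P\right\vert _{\nu }$ exactly as in the paper.
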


\begin{proof}
We begin the proof with a brief review of the good/bad grid technology of
Nazarov, Treil and Volberg. See \cite{NTV2}, \cite{NTV4}, or \cite{Vol} for
more detail. We restrict to dimension $n=1$ for the moment. Let $%
0<\varepsilon <1$ and $\mathbf{r}\in \mathbb{N}$ to be chosen later. Define $%
J$ to be $\varepsilon -\limfunc{good}$ in an interval $K$ if 
\begin{equation*}
d\left( J,\limfunc{skel}K\right) >2\left\vert J\right\vert ^{\varepsilon
}\left\vert K\right\vert ^{1-\varepsilon },
\end{equation*}%
where the skeleton $\limfunc{skel}K$ of an interval $K$ consists of its two
endpoints and its midpoint. Define $\mathcal{D}_{\left( \mathbf{r}%
,\varepsilon \right) -\limfunc{good}}$ to consist of those $J\in \mathcal{D}$
such that $J$ is good in every superinterval $K\in \mathcal{D}$ that lies at
least $\mathbf{r}$ levels above $J$. As the goodness parameters $\varepsilon 
$ and $\mathbf{r}$ will eventually be fixed throughout the proof, we
sometimes suppress the parameters, and simply write $\mathcal{D}_{\limfunc{%
good}}$ in place of $\mathcal{D}_{\left( \mathbf{r},\varepsilon \right) -%
\limfunc{good}}$, and say "$J$ is $\limfunc{good}$" instead of "$J$ is good
in every superinterval $K\in \mathcal{D}$ that lies at least $\mathbf{r}$
levels above $J$". We also define $\mathcal{D}_{\limfunc{bad}}\equiv 
\mathcal{D}\setminus \mathcal{D}_{\limfunc{good}}$.

\textbf{Parameterizations of dyadic grids}: Here we recall a construction
from \cite{SaShUr10} that was in turn based on that of Hyt\"{o}nen in \cite%
{Hyt2}. Momentarily fix a large positive integer $M\in \mathbb{N}$, and
consider the tiling of $\mathbb{R}$ by the family of intervals $\mathbb{D}%
_{M}\equiv \left\{ I_{\alpha }^{M}\right\} _{\alpha \in \mathbb{Z}}$ having
side length $2^{-M}$ and given by $I_{\alpha }^{M}\equiv
I_{0}^{M}+2^{-M}\alpha $ where $I_{0}^{M}=\left[ 0,2^{-M}\right) $. A \emph{%
dyadic grid} $\mathcal{D}$ built on $\mathbb{D}_{M}$ is\ defined to be a
family of intervals $\mathcal{D}$ satisfying:

(\textbf{1}) Each $I\in \mathcal{D}$ has side length $2^{-\ell }$ for some $%
\ell \in \mathbb{Z}$ with $\ell \leq M$, and $I$ is a union of $2^{M-\ell }$
intervals from the tiling $\mathbb{D}_{M}$,

(\textbf{2}) For $\ell \leq M$, the collection $\mathcal{D}_{\ell }$ of
intervals in $\mathcal{D}$ having side length $2^{-\ell }$ forms a pairwise
disjoint decomposition of the space $\mathbb{R}$,

(\textbf{3}) Given $I\in \mathcal{D}_{i}$ and $J\in \mathcal{D}_{j}$ with $%
j\leq i\leq M$, it is the case that either $I\cap J=\emptyset $ or $I\subset
J$.

We now momentarily fix a \emph{negative} integer $N\in -\mathbb{N}$, and
restrict the above grids to intervals of side length at most $2^{-N}$:%
\begin{equation*}
\mathcal{D}^{N}\equiv \left\{ I\in \mathcal{D}:\text{side length of }I\text{
is at most }2^{-N}\right\} \text{.}
\end{equation*}%
We refer to such grids $\mathcal{D}^{N}$ as a (truncated) dyadic grid $%
\mathcal{D}$ built on $\mathbb{D}_{M}$ of size $2^{-N}$. There are now two
traditional means of constructing probability measures on collections of
such dyadic grids, namely parameterization by choice of parent, and
parameterization by translation. We will only need the former
parameterization here. For any 
\begin{equation*}
\beta =\{\beta _{i}\}_{i\in _{M}^{N}}\in \omega _{M}^{N}\equiv \left\{
0,1\right\} ^{\mathbb{Z}_{M}^{N}},
\end{equation*}%
where $\mathbb{Z}_{M}^{N}\equiv \left\{ \ell \in \mathbb{Z}:N\leq \ell \leq
M\right\} $, define the dyadic grid $\mathcal{D}_{\beta }$ built on $\mathbb{%
D}_{m}$ of size $2^{-N}$ by 
\begin{equation}
\mathcal{D}_{\beta }=\left\{ 2^{-\ell }\left( [0,1)+k+\sum_{i:\ \ell <i\leq
m}2^{-i+\ell }\beta _{i}\right) \right\} _{N\leq \ell \leq m,\,k\in {\mathbb{%
Z}}}\ .  \label{def dyadic grid}
\end{equation}%
Place the uniform probability measure $\rho _{M}^{N}$ on the finite index
space $\omega _{M}^{N}=\left\{ 0,1\right\} ^{\mathbb{Z}_{M}^{N}}$, namely
that which charges each $\beta \in \omega _{M}^{N}$ equally.

This construction may be thought of as being \emph{parameterized by scales}
- each component $\beta _{i}$ in $\beta =\{\beta _{i}\}_{i\in _{M}^{N}}\in
\omega _{M}^{N}$ amounting to a choice of the two possible tilings at level $%
i$ that respect the choice of tiling at the level below. For purposes of
notation and clarity, we now suppress all reference to $M$ and $N$ in our
families of grids, and use the notation $\Omega $ instead of $\omega
_{M}^{N} $ for the index or parameter set, and then use $\boldsymbol{P}%
_{\Omega }$ and $\boldsymbol{E}_{\Omega }$ to denote probability and
expectation with respect to families of grids. We will now instead proceed
as if all grids considered are unrestricted. The careful reader can supply
the modifications necessary to handle the assumptions made above on the
grids $\mathcal{D}$ regarding $M$ and $N$.

Given a pair of grids $\mathcal{D}^{m}$ and $\mathcal{D}^{n}$ in $\mathbb{R}%
^{m}$ and $\mathbb{R}^{n}$ respectively, form the corresponding partial grid 
$\mathcal{R}^{m,n}=\mathcal{D}^{m}\times \mathcal{D}^{n}$ of rectangles. We
say that a rectangle $R=I\times J\in \mathcal{R}_{\limfunc{good}}^{m,n}$
(and say $R$ is good) if both $I\in \mathcal{D}_{\limfunc{good}}^{m}$ and $%
J\in \mathcal{D}_{\limfunc{good}}^{n}$. Given a positive bilinear form%
\begin{equation*}
\mathcal{B}_{\mathcal{R}^{m,n}}\left( f,g\right) \equiv \sum_{R\in \mathcal{R%
}^{m,n}}K\left( R\right) \left( \int_{R}fd\sigma \right) \left(
\int_{R}gd\omega \right) ,\ \ \ \ \ f\in L^{p}\left( \sigma \right) ,g\in
L^{q^{\prime }}\left( \omega \right) ,
\end{equation*}%
we follow the NTV idea and dominate $B_{\mathcal{R}^{m,n}}\left( f,g\right)
=B_{\mathcal{D}^{m}\times \mathcal{D}^{n}}\left( f,g\right) $ as follows:%
\begin{eqnarray*}
\mathcal{B}_{\mathcal{D}^{m}\times \mathcal{D}^{n}}\left( f,g\right) &\leq
&\left\{ \sum_{I\times J\in \mathcal{D}_{\limfunc{good}}^{m}\times \mathcal{D%
}_{\limfunc{good}}^{n}}+\sum_{I\times J\in \mathcal{D}^{m}\times \mathcal{D}%
_{\limfunc{bad}}^{n}}+\sum_{I\times J\in \mathcal{D}_{\limfunc{bad}%
}^{m}\times \mathcal{D}^{n}}\right\} K\left( I\times J\right) \left(
\int_{I\times J}fd\sigma \right) \left( \int_{I\times J}gd\omega \right) \\
&\equiv &\mathcal{B}_{\mathcal{D}_{\limfunc{good}}^{m}\times \mathcal{D}_{%
\limfunc{good}}^{n}}\left( f,g\right) +\mathcal{B}_{\mathcal{D}^{m}\times 
\mathcal{D}_{\limfunc{bad}}^{n}}\left( f,g\right) +\mathcal{B}_{\mathcal{D}_{%
\limfunc{bad}}^{m}\times \mathcal{D}^{n}}\left( f,g\right) .
\end{eqnarray*}%
From the previous subsection we have that the positive bilinear form%
\begin{equation*}
\mathcal{I}\left( f,g\right) \equiv \int_{\mathbb{R}^{m}\times \mathbb{R}%
^{n}}I_{\alpha ,\beta }^{m,n}\left( f\sigma \right) g\omega
\end{equation*}%
satisfies%
\begin{equation}
\boldsymbol{E}_{\Omega \times \Omega }\mathcal{B}_{\mathcal{D}^{m}\times 
\mathcal{D}^{n}}\left( f,g\right) \geq c\mathcal{B}_{\mathcal{D}^{m}\times 
\mathcal{D}^{n}}\left( f,g\right) ,\ \ \ \ \ \text{for all }\mathcal{D}%
^{m}\times \mathcal{D}^{n}\text{ and some }c>0.  \label{satisfy both}
\end{equation}%
It then follows that the norm $\mathfrak{N}_{\mathcal{I}}$ of the bilinear
form $\mathcal{I}$ can be estimated using $\left\Vert f\right\Vert
_{L^{p}\left( \sigma \right) }=\left\Vert g\right\Vert _{L^{q^{\prime
}}\left( \omega \right) }=1$ chosen so that $\mathfrak{N}_{\mathcal{I}}=%
\mathcal{I}\left( f,g\right) $:%
\begin{eqnarray*}
\mathfrak{N}_{\mathcal{I}} &=&\mathcal{I}\left( f,g\right) =\boldsymbol{E}%
_{\Omega \times \Omega }\mathcal{B}_{\mathcal{D}^{m}\times \mathcal{D}%
^{n}}\left( f,g\right) \\
&\leq &\boldsymbol{E}_{\Omega \times \Omega }\mathcal{B}_{\mathcal{D}_{%
\limfunc{good}}^{m}\times \mathcal{D}_{\limfunc{good}}^{n}}\left( f,g\right)
+\boldsymbol{E}_{\Omega \times \Omega }\mathcal{B}_{\mathcal{D}^{m}\times 
\mathcal{D}_{\limfunc{bad}}^{n}}\left( f,g\right) +\boldsymbol{E}_{\Omega
\times \Omega }\mathcal{B}_{\mathcal{D}_{\limfunc{bad}}^{m}\times \mathcal{D}%
^{n}}\left( f,g\right) .
\end{eqnarray*}

Now the conditional probability that a given cube $I$ is bad in a grid $%
\mathcal{D}^{m}$ that contains it is small, in fact (see e.g. \cite{NTV2}, 
\cite{NTV4}, \cite{Vol} or \cite[Subsubsection 3.1.1]{SaShUr}) we have 
\begin{equation*}
\boldsymbol{P}_{\Omega }\left\{ \mathcal{D}^{m}:I\text{ is bad in }\mathcal{D%
}^{m}\mid \text{conditioned on }I\in \mathcal{D}^{m}\right\} \leq
C2^{-\varepsilon \mathbf{r}}.
\end{equation*}%
Thus we obtain%
\begin{eqnarray*}
\boldsymbol{E}_{\Omega \times \Omega }\mathcal{B}_{\mathcal{D}^{m}\times 
\mathcal{D}_{\limfunc{bad}}^{n}}\left( f,g\right) &\leq &C2^{-\varepsilon 
\mathbf{r}}\mathfrak{N}_{\mathcal{I}}\left\Vert f\right\Vert _{L^{p}\left(
\sigma \right) }\left\Vert g\right\Vert _{L^{q^{\prime }}\left( \omega
\right) }=C2^{-\varepsilon \mathbf{r}}\mathfrak{N}_{\mathcal{I}}\ , \\
\boldsymbol{E}_{\Omega \times \Omega }\mathcal{B}_{\mathcal{D}_{\limfunc{bad}%
}^{m}\times \mathcal{D}^{n}}\left( f,g\right) &\leq &C2^{-\varepsilon 
\mathbf{r}}\mathfrak{N}_{\mathcal{I}}\left\Vert f\right\Vert _{L^{p}\left(
\sigma \right) }\left\Vert g\right\Vert _{L^{q^{\prime }}\left( \omega
\right) }=C2^{-\varepsilon \mathbf{r}}\mathfrak{N}_{\mathcal{I}}\ ,
\end{eqnarray*}%
and hence%
\begin{equation*}
\mathfrak{N}_{\mathcal{I}}\leq \boldsymbol{E}_{\Omega \times \Omega }%
\mathcal{B}_{\mathcal{D}_{\limfunc{good}}^{m}\times \mathcal{D}_{\limfunc{%
good}}^{n}}\left( f,g\right) +2C2^{-\varepsilon \mathbf{r}}\mathfrak{N}_{%
\mathcal{I}}\ ,
\end{equation*}%
which gives%
\begin{equation*}
\mathfrak{N}_{\mathcal{I}}\leq \frac{1}{1-2C2^{-\varepsilon \mathbf{r}}}%
\boldsymbol{E}_{\Omega \times \Omega }\mathcal{B}_{\mathcal{D}_{\limfunc{good%
}}^{m}\times \mathcal{D}_{\limfunc{good}}^{n}}\left( f,g\right)
\end{equation*}%
if $\varepsilon \mathbf{r}$ is chosen sufficiently small.

Thus we see that in order to prove Theorem \ref{theorem rev doub}, we need
only consider the `good' bilinear form $\mathcal{B}_{\mathcal{D}_{\limfunc{%
good}}^{m}\times \mathcal{D}_{\limfunc{good}}^{n}}\left( f,g\right) $ and
estimate it independently of the partial grid of good rectangles $\mathcal{D}%
_{\limfunc{good}}^{m}\times \mathcal{D}_{\limfunc{good}}^{n}$. Then using
arguments as in \cite{TaYa} or above, the proof of Theorem \ref{theorem rev
doub} is reduced to the following Carleson embedding for `good' rectangles.

\textbf{Carleson embedding}: Suppose that $1<s<r<\infty $ and that $\mu $ is
a product reverse doubling measure on $\mathbb{R}^{m}\times \mathbb{R}^{n}$.
Then we have%
\begin{equation*}
\left\{ \sum_{R\in \mathcal{D}_{\limfunc{good}}^{m}\times \mathcal{D}_{%
\limfunc{good}}^{n}}\left\vert R\right\vert _{\mu }^{\frac{r}{s}}\left( 
\frac{1}{\left\vert R\right\vert _{\mu }}\int_{R}fd\mu \right) ^{r}\right\}
^{\frac{1}{r}}\leq C_{s,r}\left\Vert f\right\Vert _{L^{s}\left( \mu \right)
}\ ,\ \ \ \ \ f\geq 0,
\end{equation*}%
where $C_{s,r}$ depends only on $s$, $r$, the reverse doubling constants for 
$\mu $, and the goodness parameters $\varepsilon ,\mathbf{r}$. In paricular, 
$C_{s,r}$ is independent of the partial grid $\mathcal{D}_{\limfunc{good}%
}^{m}\times \mathcal{D}_{\limfunc{good}}^{n}$. Continuing to follow the
iteration argument of Tanaka and Yabuta as in \cite{TaYa} or above, further
reduces matters to proving the following Carleson condition on cubes for a
reverse doubling measure $\mu $ on $\mathbb{R}^{N}$ with exponent $\eta >0$,
and a power $\rho >1$:%
\begin{equation}
\sum_{Q\in \mathcal{D}_{\limfunc{good}}^{N}:\ Q\subset P}\left\vert
Q\right\vert _{\mu }^{\rho }\leq C_{N,\mathbf{r},\varepsilon ,\rho
}\left\vert P\right\vert _{\mu ,\theta }^{\rho }\ .  \label{Car rev doub}
\end{equation}%
Indeed, the reader can easily verify that the arguments work just as well
for the subgrids $\mathcal{D}_{\limfunc{good}}^{m}$ and $\mathcal{D}_{%
\limfunc{good}}^{n}$ in place of the grids $\mathcal{D}^{m}$ and $\mathcal{D}%
^{n}$.

It is now at this point that the goodness of the cubes $Q$ plays a crucial
role in conjuction with the reverse doubling property. To see (\ref{Car rev
doub}), recall the goodness parameters $0<\varepsilon <1$ and $\mathbf{r}\in 
\mathbb{N}$ and observe that if $Q$ is a good cube contained in $P$ then%
\newline
\textbf{either} $\ell \left( Q\right) \geq \ell \left( P\right) -\mathbf{r}$
and we can use the trivial estimate $\left\vert Q\right\vert _{\mu }^{\rho
}\leq \left\vert P\right\vert _{\mu }^{\rho }$,\newline
\textbf{or} $\ell \left( Q\right) <\ell \left( P\right) -r$ in which case $%
\limfunc{dist}\left( Q,\partial P\right) \geq 2\ell \left( Q\right)
^{\varepsilon }\ell \left( P\right) ^{1-\varepsilon }$.

In this latter case we note that if $\ell \left( Q\right) =2^{-k}\ell \left(
P\right) $ then 
\begin{equation*}
2^{k\left( 1-\varepsilon \right) }Q=\left( \frac{\ell \left( P\right) }{\ell
\left( Q\right) }\right) ^{1-\varepsilon }Q\subset \frac{2\ell \left(
Q\right) ^{\varepsilon }\ell \left( P\right) ^{1-\varepsilon }}{\ell \left(
Q\right) }Q\subset \frac{\limfunc{dist}\left( Q,\partial P\right) }{\ell
\left( Q\right) }Q\subset P
\end{equation*}%
and so by reverse doubling we have 
\begin{equation*}
\left\vert Q\right\vert _{\mu }\leq C2^{-\eta k\left( 1-\varepsilon \right)
}\left\vert \left( \frac{\ell \left( P\right) }{\ell \left( Q\right) }%
\right) ^{1-\varepsilon }Q\right\vert _{\mu }\leq C2^{-\eta \left(
1-\varepsilon \right) k}\left\vert P\right\vert _{\mu }\ .
\end{equation*}%
Thus we can estimate%
\begin{eqnarray*}
\sum_{Q\in \mathcal{D}_{\limfunc{good}}^{N}:\ Q\subset P}\left\vert
Q\right\vert _{\mu }^{\rho } &=&\sum_{k=0}^{\mathbf{r}}2^{N\mathbf{r}%
}\left\vert P\right\vert _{\mu }^{\rho }+\sum_{k=\mathbf{r}+1}^{\infty
}\sum_{Q\in \mathcal{D}_{\limfunc{good}}^{N}:\ \ell \left( Q\right)
=2^{-k}\ell \left( P\right) }\left\vert Q\right\vert _{\mu }^{\rho
-1}\left\vert Q\right\vert _{\mu } \\
&\leq &C_{N,\mathbf{r}}\left\vert P\right\vert _{\mu }^{\rho }+\sum_{k=%
\mathbf{r}+1}^{\infty }\sum_{Q\in \mathcal{D}_{\limfunc{good}}^{N}:\ \ell
\left( Q\right) =2^{-k}\ell \left( P\right) }\left( C2^{-\eta \left(
1-\varepsilon \right) k}\left\vert P\right\vert _{\mu }\right) ^{\rho
-1}\left\vert Q\right\vert _{\mu } \\
&\leq &C_{N,\mathbf{r}}\left\vert P\right\vert _{\mu }^{\rho }+\left\{
\sum_{k=0}^{\infty }\left( C2^{-\eta \left( 1-\varepsilon \right) \left(
\rho -1\right) k}\right) \right\} \left\vert P\right\vert _{\mu }^{\rho
}=C_{N,\mathbf{r},\varepsilon ,\rho }\left\vert P\right\vert _{\mu }^{\rho
}\ .
\end{eqnarray*}%
This completes the proof of (\ref{Car rev doub}), and hence also that of
Theorem \ref{theorem rev doub}.
\end{proof}

\subsection{Concluding remarks}

In the case of kernels $K=K_{\alpha ,\beta }^{m,n}$ given by (\ref{def K}),
or more generally that satisfy (\ref{satisfy both}), one can assume for each
weight separately, either rectangle reverse doubling, or a half $\theta $%
-bump condition, in order to obtain norm boundedness. For example, the
following hybrid theorem holds.

\begin{theorem}
Suppose $1<p<q<\infty $. Let $\sigma $ be a product reverse doubling weight
on $\mathbb{R}^{n}$, let $d\omega \left( x\right) =w\left( x\right) ^{q}dx$
be absolutely continuous with respect to Lebesgue measure, and let $%
K=K_{\alpha ,\beta }^{m,n}:\mathcal{R}^{m,n}\rightarrow \left[ 0,\infty
\right) $ be as in (\ref{def K}), or more generally satisfy (\ref{satisfy
both}). Then the norm $\mathbb{N}_{K}\left( \sigma ,\omega \right) $ of the
positive bilinear inequality,%
\begin{equation*}
\sum_{R\in \mathcal{R}^{m,n}}K\left( R\right) \left( \int_{R}fd\sigma
\right) \left( \int_{R}gd\omega \right) \leq \mathbb{N}_{K}\left( \sigma
,\omega \right) \ \left\Vert f\right\Vert _{L^{p}\left( \sigma \right)
}\left\Vert g\right\Vert _{L^{q^{\prime }}\left( \omega \right) }\ ,\ \ \ \
\ f,g\geq 0,
\end{equation*}%
is finite for all products of grids $\mathcal{R}^{m,n}=\mathcal{D}^{m}\times 
\mathcal{D}^{n}$ if the half $\theta $-bump rectangle characteristic $%
\mathbb{A}_{K,\theta }^{\omega }\left( \sigma ,\omega \right) $ is finite,
where%
\begin{eqnarray*}
\mathbb{A}_{K,\theta }^{\omega }\left( \sigma ,\omega \right) &\equiv
&\sup_{R\in \mathcal{R}^{m,n}}K\left( R\right) \ \left(
\int_{R}v^{-p^{\prime }}d\sigma \right) ^{\frac{1}{p^{\prime }}}\ \left[
\left\vert R\right\vert ^{\frac{1}{q\theta ^{\prime }}}\left(
\int_{R}w^{q\theta }d\omega \right) ^{\frac{1}{q\theta }}\right] \\
&=&\sup_{R\in \mathcal{R}^{m,n}}K\left( R\right) \ \left\vert R\right\vert
_{\omega ,\theta }^{\frac{1}{q}}\ \left\vert R\right\vert _{\sigma }^{\frac{1%
}{p^{\prime }}}\ .
\end{eqnarray*}
\end{theorem}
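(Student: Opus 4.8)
The plan is to combine the two techniques already developed in the paper: use the full $\theta$-bump iteration argument (Theorem \ref{variation}) in the $\omega$-variable, and the NTV good/bad-grid plus reverse-doubling argument (Theorem \ref{theorem rev doub}) in the $\sigma$-variable. First I would choose $p<r<q$ and apply H\"older's inequality with exponents $r$ and $r'$ exactly as in the proof of Theorem \ref{variation}, but splitting the weights asymmetrically: I write
\begin{equation*}
K(R)\Big(\int_R fd\sigma\Big)\Big(\int_R gd\omega\Big)
=\Big\{K(R)\,|R|_{\sigma}^{\frac{1}{p'}}|R|_{\omega,\theta}^{\frac{1}{q}}\Big\}\,|R|_{\sigma}^{\frac{1}{p}}|R|_{\omega,\theta}^{\frac{1}{q'}}\Big(\frac{1}{|R|_{\sigma}}\int_R fd\sigma\Big)\Big(\frac{1}{|R|_{\omega,\theta}}\int_R gd\omega\Big),
\end{equation*}
so that the bracketed factor is bounded by $\mathbb{A}_{K,\theta}^{\omega}(\sigma,\omega)$, and H\"older's inequality in $r,r'$ reduces everything to two Carleson embeddings: one of the form $\sum_R |R|_{\sigma}^{r/p}\big(\frac{1}{|R|_\sigma}\int_R fd\sigma\big)^r\lesssim \|f\|_{L^p(\sigma)}^r$ with the \emph{unbumped} functional $|R|_\sigma$ (available because $\sigma$ is product reverse doubling, via the good-grid argument of Theorem \ref{theorem rev doub}), and one of the form $\sum_R |R|_{\omega,\theta}^{r'/q'}\big(\frac{1}{|R|_{\omega,\theta}}\int_R gd\omega\big)^{r'}\lesssim \|g\|_{L^{q'}(\omega)}^{r'}$ with the \emph{bumped} functional (available unconditionally via the Proposition following Theorem \ref{variation}, since $q'<r'$).

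Second, since the $\sigma$-Carleson embedding requires goodness of rectangles, I would run the NTV reduction from the proof of Theorem \ref{theorem rev doub} verbatim: invoke the expectation inequality (\ref{satisfy both}), split $\mathcal{B}_{\mathcal{D}^m\times\mathcal{D}^n}$ into the good-good part plus two bad parts, absorb the bad parts into $\mathfrak{N}_{\mathcal{I}}$ using the small probability $C2^{-\varepsilon\mathbf{r}}$ of badness, and fix $\varepsilon\mathbf{r}$ small. This leaves only the good bilinear form, for which the $\sigma$-side Carleson embedding with the unbumped functional follows from (\ref{Car rev doub}) — the goodness-plus-reverse-doubling estimate — applied in each of the two factors $\mathcal{D}_{\good}^m$ and $\mathcal{D}_{\good}^n$ via the iteration (Fubini-style) argument of the Proposition after Theorem \ref{variation}. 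Note that the $\omega$-side needs no goodness at all, so it is harmless that $g$ is integrated over good rectangles only: restricting the sum to a subcollection only decreases the left side of its Carleson embedding.

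Third, one must check the iteration actually decouples the two variables cleanly when the two Carleson embeddings are of different types. Following the Proposition's scheme, one first peels off the $J$-integration using the $\omega$ (bumped) embedding in the $\mathbb{R}^n$ variable applied to the slice measures $\mu_x$, then uses Minkowski's inequality in $L^{r/s}$, then peels off the $I$-integration using the $\sigma$ (unbumped, reverse-doubling, good) embedding in the $\mathbb{R}^m$ variable applied to the slice measures $(J_{\mu})^{}$; the key structural fact $|I\times J|_{\mu}=|I|_{J_\mu,\theta}$ (or its unbumped analogue $|I\times J|_\sigma=\int_I |J|_{\sigma_x}dx$) makes this telescoping work. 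The main obstacle I expect is bookkeeping: making sure the reverse-doubling Carleson estimate (\ref{Car rev doub}) and the automatic bumped Carleson estimate (\ref{automatic}) can be interleaved in the same iteration without a mismatch of exponents — i.e.\ that choosing a single $s$ with $q'<s<r'$ on one side and the \emph{same} $r$ with $p<r<q$ on the other is consistent. Since $p<q$ forces $q'<p'$ and there is room to pick $r\in(p,q)$ and then $s\in(1,r)$ appropriately for each factor, this is a matter of care rather than a genuine difficulty, and the hybrid theorem follows.
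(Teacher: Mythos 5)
Your proposal is correct and is essentially the paper's intended argument: the paper dismisses this theorem with the remark that it is ``an easy exercise in combining the proofs of Theorems \ref{variation} and \ref{theorem rev doub},'' and that is precisely what you do --- first the NTV good/bad expectation reduction via (\ref{satisfy both}) so that only good rectangles remain, then H\"older with $p<r<q$ against the half-bump characteristic $\mathbb{A}_{K,\theta }^{\omega }$, leaving the unbumped reverse-doubling Carleson embedding over good rectangles for $\sigma $ and the automatic $\theta$-bump embedding of the Proposition for $\omega $. The only quibble is that the worry in your last paragraph about interleaving the bumped estimate (\ref{automatic}) and the reverse-doubling estimate (\ref{Car rev doub}) in a single iteration is moot: after the H\"older step the two Carleson embeddings decouple completely, the $\sigma$-sum (exponent $\frac{r}{p}$, i.e.\ $s=p<r$) and the $\omega$-sum (exponent $\frac{r'}{q'}$, i.e.\ $s=q'<r'$) each being handled by its own one-measure iteration, so no mixed iteration involving both $\sigma$ and $\omega$ ever occurs.
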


The proof is an easy exercise in combining the proofs of Theorems \ref%
{variation} and \ref{theorem rev doub} above.

\section{Appendix}

We say that a weight $\mu $ on the real line is \textbf{strongly} reverse
doubling if there is $\beta <1$ such that%
\begin{equation*}
\left\vert I_{\limfunc{left}}\right\vert _{\mu },\left\vert I_{\limfunc{right%
}}\right\vert _{\mu }\leq \beta \left\vert I\right\vert _{\mu }\text{ for
all intervals }I,
\end{equation*}%
where if $I=\left[ a,b\right) $, then $I_{\limfunc{left}}=\left[ a,\frac{a+b%
}{2}\right) $ and $I_{\limfunc{right}}=\left[ \frac{a+b}{2},b\right) $ are
the left and right halves of $I$ respectively. A strongly reverse doubling
weight on $\mathbb{R}$ is a doubling weight on $\mathbb{R}$, since if we
choose $N$ so large that $\beta ^{N}<\frac{1}{4}$, then for $I=\left[
a,b\right) $, we have 
\begin{equation*}
\left\vert \left[ a,a+\frac{b-a}{2^{N}}\right) \right\vert _{\mu },\ \ \
\left\vert \left[ b-\frac{b-a}{2^{N}},b\right) \right\vert _{\mu }\leq \beta
^{N}\left\vert I\right\vert _{\mu }<\frac{1}{4}\left\vert I\right\vert _{\mu
}\ .
\end{equation*}%
Hence%
\begin{eqnarray*}
\left\vert \left[ a+\frac{b-a}{2^{N}},b-\frac{b-a}{2^{N}}\right) \right\vert
_{\mu } &=&\left\vert \left[ a,b\right) \right\vert _{\mu }-\left\vert \left[
a,a+\frac{b-a}{2^{N}}\right) \right\vert _{\mu }-\left\vert \left[ b-\frac{%
b-a}{2^{N}},b\right) \right\vert _{\mu } \\
&\geq &\left( 1-\frac{1}{4}-\frac{1}{4}\right) \left\vert I\right\vert _{\mu
}=\frac{1}{2}\left\vert I\right\vert _{\mu }\ ,
\end{eqnarray*}%
where the length of the interval $\left[ a+\frac{b-a}{2^{N}},b-\frac{b-a}{%
2^{N}}\right) $ is $\frac{2^{N-1}-1}{2^{N-1}}\ell \left( I\right) $. Thus
with $\gamma =\frac{2^{N-1}}{2^{N-1}-1}>1$, we have for every interval $K$,%
\begin{equation*}
\left\vert \gamma K\right\vert _{\mu }\leq 2\left\vert K\right\vert _{\mu
},\ \text{hence }\left\vert 2K\right\vert _{\mu }\leq 2^{M}\left\vert
K\right\vert _{\mu }\text{ if }\gamma ^{M}\geq 2,
\end{equation*}%
which shows that $\mu $ is doubling. Similarly we see that a strongly
rectangle reverse doubling weight on $\mathbb{R}^{N}$ is a rectangle
doubling weight on $\mathbb{R}^{N}$. Here $\mu $ is strongly rectangle
reverse doubling if there is $\beta <1$ such that%
\begin{eqnarray*}
&&\left\vert I^{1}\times ...\times I_{\limfunc{left}}^{k}\times ...\times
I^{N}\right\vert _{\mu },\left\vert I^{1}\times ...\times I_{\limfunc{right}%
}^{k}\times ...\times I^{N}\right\vert _{\mu } \\
&\leq &\beta \left\vert I^{1}\times ...\times I_{\mu }^{N}\right\vert _{\mu }%
\text{ for all rectangles }I^{1}\times ...\times I^{N}\text{\ and }1\leq
k\leq N,
\end{eqnarray*}%
and $\mu $ is rectangle doubling if there is $C>0$ such that%
\begin{equation*}
\left\vert \left( 2I^{1}\right) \times ...\times \left( 2I^{N}\right)
\right\vert _{\mu }\leq C\left\vert I^{1}\times ...\times I^{N}\right\vert
_{\mu }\text{ for all rectangles }I^{1}\times ...\times I^{N}.
\end{equation*}

\begin{example}
Suppose that $\mu $ is a doubling weight on $\mathbb{R}^{N}$. Then $d\nu
\left( x\right) \equiv \mathbf{1}_{\left[ 0,\infty \right) ^{N}}\left(
x\right) \mu \left( x\right) $ is a reverse doubling weight on $\mathbb{R}%
^{N}$ that is not a doubling weight on $\mathbb{R}^{N}$.
\end{example}

\end{document}